\documentclass[11pt,twoside]{amsart}
\textwidth=450pt \oddsidemargin=-6pt
\evensidemargin=-6pt

\usepackage[latin1]{inputenc}
\usepackage[T1]{fontenc}
\usepackage{amsmath}
\usepackage{amsthm}
\usepackage{amssymb}
\usepackage[all]{xypic}

\newtheorem{thm}{Theorem}[section]

\newtheorem{conj}[thm]{Conjecture}

\numberwithin{equation}{section}

\theoremstyle{definition}

\newtheorem{remark}[thm]{Remark}
\newtheorem{ex}[thm]{Example}

\newcommand{\Db}{{\rm D}^{\rm b}}

\newcommand{\Aut}{{\rm Aut}}

\newcommand{\CH}{{\rm CH}}
\newcommand{\NS}{{\rm NS}}
\newcommand{\Pic}{{\rm Pic}}

\newcommand{\rk}{{\rm rk}}
\newcommand{\coh}{{\rm{Coh}}}

\newcommand{\comp}{\circ}

\newcommand{\Ext}{{\rm Ext}}

\newcommand{\cal}{\mathcal}

\newcommand{\kc}{{\cal C}}

\newcommand{\ke}{{\cal E}}

\newcommand{\kh}{{\cal H}}

\newcommand{\kk}{{\cal K}}

\newcommand{\ko}{{\cal O}}
\newcommand{\kp}{{\cal P}}

\newcommand{\ZZ}{\mathbb{Z}}
\newcommand{\QQ}{\mathbb{Q}}
\newcommand{\RR}{\mathbb{R}}
\newcommand{\CC}{\mathbb{C}}

\newcommand{\PP}{\mathbb{P}}
\newcommand{\OO}{{\rm O}}

\newcommand{\ch}{\rm{ch}}
\newcommand{\td}{\rm{td}}
\renewcommand{\to}{\xymatrix@1@=15pt{\ar[r]&}}
\renewcommand{\rightarrow}{\xymatrix@1@=15pt{\ar[r]&}}
\renewcommand{\mapsto}{\xymatrix@1@=15pt{\ar@{|->}[r]&}}
\renewcommand{\twoheadrightarrow}{\xymatrix@1@=15pt{\ar@{->>}[r]&}}
\renewcommand{\hookrightarrow}{\xymatrix@1@=15pt{\ar@{^(->}[r]&}}
\newcommand{\congpf}{\xymatrix@1@=15pt{\ar[r]^-\sim&}}
\renewcommand{\cong}{\simeq}

\begin{document}

\title[Chow groups and derived categories of K3 surfaces]{Chow groups and derived categories of K3 surfaces}
\author[D.\ Huybrechts]{Daniel Huybrechts}

\address{Mathematisches Institut,
Universit{\"a}t Bonn, Endenicher Allee 60, 53115 Bonn, Germany}
\email{huybrech@math.uni-bonn.de}

\maketitle


\begin{abstract}\noindent
The geometry of a K3 surface (over $\CC$ or over $\bar\QQ$) is
reflected by its Chow group and its bounded derived category of
coherent sheaves in different ways. The Chow group can be infinite
dimensional over $\CC$ (Mumford) and is expected to inject into
cohomology over $\bar\QQ$ (Bloch--Beilinson). The derived category
is difficult to describe explicitly, but its group of
autoequivalences can be studied by means of the natural
representation on cohomology. Conjecturally  (Bridgeland) the
kernel of this representation is generated by squares of spherical
twists. The action of these spherical twists on the Chow ring can
be determined explicitly by relating it to the natural subring
introduced by Beauville and Voisin.
\end{abstract}

\section{Introduction}

In algebraic geometry a K3 surface is a smooth projective surface
$X$ over a fixed field $K$ with trivial canonical bundle
$\omega_X\cong\Omega_X^2$ and $H^1(X,\ko_X)=0$. For us the field
$K$ will be either a number field, the field of algebraic numbers
$\bar\QQ$ or the complex number field $\CC$. Non-projective K3
surfaces play a central role in the theory  of K3 surfaces and for
some of the results that will be discussed in this text in
particular, but here we will not discuss those more analytical
aspects.

An explicit example of a K3 surface is provided by the Fermat
quartic in $\PP^3$ given as the zero set of the polynomial
$x_0^4+\ldots+x_3^4$. Kummer surfaces, i.e.\ minimal resolutions
of the quotient of abelian surfaces by the sign involution, and
elliptic K3 surfaces form other important classes of examples.
Most of the results and questions that will  be mentioned do not
loose any of their interest when considered for one of theses
classes of examples or any other  particular K3 surface.

This text deals with three objects naturally associated with any
K3 surface $X$: $$\Db(X),\phantom{i}\CH^*(X)\phantom{i}{\rm
and}\phantom{i} H^*(X,\ZZ).$$

If $X$ is defined over $\CC$, its \emph{singular cohomology}
$H^*(X,\ZZ)$ is endowed with the intersection pairing and a
natural Hodge structure. The \emph{Chow group} $\CH^*(X)$ of $X$,
defined over an arbitrary field, is a graded ring that encodes
much of the algebraic geometry. The \emph{bounded derived
category} $\Db(X)$, a linear triangulated category, is a more
complicated invariant and in general difficult to control.

As we will see, all three objects, $H^*(X,\ZZ)$, $\CH^*(X)$, and
$\Db(X)$ are related to each other. On the one hand, $\widetilde
H(X,\ZZ)$ as the easiest of the three can be used to capture some
of the features of the other two. But on the other hand and maybe
a little surprising, one can deduce from the more rigid structure
of $\Db(X)$ as a linear triangulated category interesting
information about cycles on $X$, i.e.\ about some aspects of
$\CH^*(X)$.

This text is based on  my talk at the conference `Classical
algebraic geometry today' at the MSRI in January 2009 and is meant
as a non-technical introduction to the standard techniques in the
area. At the same time it  surveys  recent developments and
presents some new results on a question on symplectomorphisms that
was raised in this talk (see Section \ref{sect:Aut}). I wish to
thank the organizers for the invitation to a very stimulating
conference.


\section{Cohomology of K3 surfaces}

The second singular cohomology of a complex K3 surface is endowed
with the additional structure of a weight two Hodge structure and
the intersection pairing. The Global Torelli theorem shows that it
determines the K3 surface uniquely. We briefly recall the main
features of this Hodge structure and of its extension to the Mukai
lattice which governs the derived category of the K3 surface. For
the general theory of complex K3 surfaces see e.g.\ \cite{BHPV} or
\cite{Ast}. In this section  all K3 surfaces are defined over
$\CC$.

\subsection{}  To any complex K3 surface
$X$ we can associate the singular cohomology $H^*(X,\ZZ)$ (of the
underlying complex or topological manifold). Clearly,
$H^0(X,\ZZ)\cong H^4(X,\ZZ)\cong\ZZ$. Hodge decomposition yields
$H^1(X,\CC)\cong H^{1,0}(X)\oplus H^{0,1}(X)=0$, since by
assumption $H^{0,1}(X)\cong H^1(X,\ko_X)=0$, and hence
$H^1(X,\ZZ)=0$. One can also show  $H^3(X,\ZZ)=0$. Thus, the only
interesting cohomology group is $H^2(X,\ZZ)$ which together with
the intersection pairing is abstractly isomorphic to the unique
even unimodular  lattice of signature $(3,19)$ given by $U^{\oplus
3}\oplus E_8(-1)^{\oplus2}$. Here, $U$ is the hyperbolic plane and
$E_8(-1)$ is the standard root lattice  $E_8$ changed by a sign.
Thus, the full cohomology $H^*(X,\ZZ)$ endowed with the
intersection pairing is isomorphic to $U^{\oplus 4}\oplus
E_8(-1)^{\oplus 2}$.

For later use we introduce $\widetilde H(X,\ZZ)$, which denotes
$H^*(X,\ZZ)$ with the Mukai paring, i.e.\ with a sign change in
the pairing between $H^0$ and $H^4$. Note that as abstract
lattices $H^*(X,\ZZ)$ and $\widetilde H(X,\ZZ)$ are isomorphic.

\subsection{}\label{subsect:GT} The complex structure of the K3 surface $X$ induces
a weight two Hodge structure on $H^2(X,\ZZ)$ given explicitly by
the decomposition $H^2(X,\CC)=H^{2,0}(X)\oplus H^{1,1}(X)\oplus
H^{0,2}(X)$. It is determined by the complex line
$H^{2,0}(X)\subset H^2(X,\CC)$ which is spanned by a trivializing
section of $\omega_X$ and by requiring the decomposition to be
orthogonal with respect to the intersection pairing. This natural
Hodge structure induces at the same time a weight two Hodge
structure on the Mukai lattice $\widetilde H(X,\ZZ)$ by setting
$\widetilde H^{2,0}(X)= H^{2,0}(X)$ and requiring $(H^0\oplus
H^4)(X,\CC)\subset \widetilde H^{1,1}(X)$.

The Global Torelli theorem and its derived version, due to
Piatetski-Shapiro and  Shafarevich resp.\ Mukai and Orlov, can be
stated as follows. For  complex projective K3 surfaces $X$ and
$X'$ one has:

 i) There exists an isomorphism $X\cong X'$ (over
$\CC$) if and only if there exists an isometry of Hodge structures
$H^2(X,\ZZ)\cong H^2(X',\ZZ)$.

 ii) There exists a $\CC$-linear
exact equivalence $\Db(X)\cong\Db(X')$ if and only if there exists
an isometry of Hodge structures $\widetilde
H(X,\ZZ)\cong\widetilde H(X',\ZZ)$.

\smallskip

 Note that for purely lattice theoretical reasons the
weight two Hodge structures $\widetilde H(X,\ZZ)$ and $\widetilde
H(X',\ZZ)$ are isometric if and only if their transcendental parts
(see \ref{subsect:NST}) are.


\subsection{}\label{subsect:NST} The Hodge index theorem shows that the intersection
pairing on $H^{1,1}(X,\RR)$ has signature $(1,19)$. Thus the cone
of classes $\alpha$ with $\alpha^2>0$ decomposes into two
connected components. The connected component $\kc_X$ that
contains the K\"ahler cone $\kk_X$, i.e.\ the cone of all K\"ahler
classes, is called the positive cone. Note that for the Mukai
lattice $\widetilde H(X,\ZZ)$ the set of real $(1,1)$-classes of
positive square is connected.

The N\'eron--Severi group $\NS(X)$ is identified with
$H^{1,1}(X)\cap H^2(X,\ZZ)$ and its rank is the Picard number
$\rho(X)$. Since $X$ is projective, the intersection form on
$\NS(X)_\RR$ has signature $(1,\rho(X)-1)$. The transcendental
lattice $T(X)$ is by definition the orthogonal complement of
$\NS(X)\subset H^2(X,\ZZ)$.  Hence, $H^2(X,\QQ)=\NS(X)_\QQ\oplus
T(X)_\QQ$ which can be read as an orthogonal decomposition of
weight two rational Hodge structures (but in general not over
$\ZZ$). Note that $T(X)_\QQ$ cannot be decomposed further, it is
an irreducible Hodge structure. The ample cone is the intersection
of the K\"ahler cone $\kk_X$ with $\NS(X)_\RR$ and is spanned by
ample line bundles.

Analogously, one has the extended N\'eron--Severi group
$$\widetilde\NS(X):=\widetilde H^{1,1}(X)\cap \widetilde
H(X,\ZZ)=\NS(X)\oplus (H^0\oplus H^4)(X,\ZZ).$$

Note that $\widetilde \NS(X)$ is simply the lattice of all
algebraic classes. More precisely, $\widetilde\NS(X)$ can be seen
as the image of the cycle map $\CH^*(X)\to H^*(X,\ZZ)$ or the set
of all Mukai vectors $v(E)=\ch(E).\sqrt{\td(X)}=\ch(E).(1,0,1)$
with $E\in\Db(X)$. Note that the transcendental lattice in
$\widetilde H(X,\ZZ)$ coincides with $T(X)$.

\subsection{}\label{subsect:cone} The so-called $(-2)$-classes, i.e.\
integral $(1,1)$-classes $\delta$ with $\delta^2=-2$, play a
central role in the classical theory as well as in the modern part
related to derived categories and Chow groups.

Classically, one considers the set $\Delta_X$ of $(-2)$-classes in
$\NS(X)$. E.g.\ every smooth rational curve $\PP^1\cong C\subset
X$ defines by adjunction a $(-2)$-class, hence $C$ is called a
$(-2)$-curve. Examples of $(-2)$-classes in the extended
N\'eron--Severi lattice $\widetilde \NS(X)$ are provided by the
Mukai vector $v(E)$ of spherical objects $E\in\Db(X)$ (see
\ref{subsect:FM}  and \ref{subsect:Sph}). Note that
$v(\ko_C)\ne[C]$, but $v(\ko_C(-1))=[C]$. For later use we
introduce $\widetilde\Delta_X$ as the set of $(-2)$-classes in
$\widetilde\NS(X)$.

Clearly, an ample or, more generally, a K\"ahler class has
positive intersection with all effective curves and with
$(-2)$-curves in particular. Conversely, one knows that every
class $\alpha\in\kc_X$ with $(\alpha.C)>0$ for all $(-2)$-curves
is a K\"ahler class (cf.\ \cite{BHPV}).

To any $(-2)$-class $\delta$ one associates the reflection
$s_\delta:\alpha\mapsto \alpha+(\alpha.\delta)\delta$ which is an
orthogonal transformation of the lattice also  preserving the
Hodge structure. The Weyl group is by definition the subgroup of
the orthogonal group generated by reflections $s_\delta$. So one
has two groups $W_X\subset \OO(H^2(X,\ZZ))$ and $\widetilde
W_X\subset\OO(\widetilde H(X,\ZZ))$.

The union of hyperplanes $\bigcup_{\delta\in\Delta_X}\delta^\perp$
is locally finite in the interior of $\kc_X$ and endows $\kc_X$
with a chamber structure. The Weyl group $W_X$ acts simply
transitively on the set of chambers and the K\"ahler cone is one
of the chambers. The action of $W_X$ on $\NS(X)_\RR\cap\kc_X$ can
be studied analogously. It can also be shown that reflections
$s_{[C]}$ with $C\subset X$ smooth rational curves generate $W_X$.

Another part of the Global Torelli theorem complementing i) in
\ref{subsect:GT} says that a non-trivial automorphism
$f\in\Aut(X)$ acts always non-trivially on $H^2(X,\ZZ)$. Moreover,
any Hodge iso\-metry of $H^2(X,\ZZ)$ preserving the positive cone
is induced by an automorphism up to the action of $W_X$. In fact,
Piatetski-Shapiro and  Shafarevich also showed that the action on
$\NS(X)$ is essentially enough to determine $f$. More precisely,
one knows that the natural homomorphism
$$\Aut(X)\to\OO(\NS(X))/W_X$$
has finite kernel and cokernel. Roughly, the kernel is finite
because an automorphism that leaves invariant a polarization is an
isometry of the underlying hyperk\"ahler structure and these
isometries  form a compact group. For the finiteness of the
cokernel note that some high power of any automorphism $f$ always
acts trivially on $T(X)$.

The extended N\'eron--Severi group plays also the role of a period
domain for the space of stability conditions on $\Db(X)$ (see
\ref{subsect:ker}). For this consider  the open set
$\kp(X)\subset\widetilde \NS(X)_\CC$ of vectors whose real and
imaginary parts span a positively oriented positive plane. Then
let $\kp_0(X)\subset\kp(X)$ be the complement of the union of all
codimension two sets $\delta^\perp$ with $\delta\in\widetilde
\NS(X)$ and $\delta^2=-2$ (or, equivalently, $\delta=v(E)$ for
some spherical object $E\in\Db(X)$ as we will explain later):
$$\kp_0(X):=\kp(X)\setminus\bigcup_{\delta\in\widetilde\Delta_X}\delta^\perp.$$

Since the signature of the intersection form on $\widetilde\NS(X)$
is $(2,\rho(X))$, the set $\kp_0(X)$ is connected. Its fundamental
group $\pi_1(\kp_0(X))$ is generated by loops around each
$\delta^\perp$ and the one induced by the natural $\CC^*$-action.



\section{Chow ring}

We now turn to the second object that can naturally be associated
with any K3 surface $X$ defined over an arbitrary field $K$, the
Chow group $\CH^*(X)$. For a separably closed field like $\bar\QQ$
or $\CC$ it is torsion free due to a theorem of Roitman
\cite{Roit} and for number fields we will simply ignore everything
that is related to the possible occurrence of torsion. The
standard reference for Chow groups is Fulton's book \cite{Fulton}.
For the interplay  between Hodge theory and Chow groups see e.g.\
\cite{Voisin}.

\subsection{} The Chow group $\CH^*(X)$ of a K3 surface (over $K$) is
the group of cycles modulo rational equivalence. Thus,
$\CH^0(X)\cong\ZZ$ (generated by $[X]$) and $\CH^1(X)=\Pic(X)$.
The interesting part is $\CH^2(X)$ which behaves differently for
$K=\bar\QQ$ and $K=\CC$. Let us begin with the following
celebrated result of Mumford \cite{Mum}.

\begin{thm}{\bf (Mumford)}
If $K=\CC$, then $\CH^2(X)$ is infinite dimensional.
\end{thm}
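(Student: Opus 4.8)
To prove the statement I first have to fix what ``infinite dimensional'' means. Since $H^1(X,\ko_X)=0$, the surface $X$ has trivial Albanese, so all the interesting content sits in the degree-zero part $\CH^2(X)_0:=\ker\big(\deg\colon\CH^2(X)\longrightarrow\ZZ\big)$. Calling $\CH^2(X)_0$ \emph{finite dimensional} (or \emph{representable}) will mean that there are a smooth projective curve $C$ and a correspondence $\Gamma\in\CH^2(C\times X)$ for which $\Gamma_*\colon\Pic^0(C)\longrightarrow\CH^2(X)_0$ is surjective; the theorem asserts that this never happens. The single geometric feature I plan to exploit is that a K3 surface carries a nowhere-vanishing holomorphic $2$-form: from $\omega_X\cong\ko_X$ one gets $H^0(X,\Omega_X^2)=H^{2,0}(X)\neq0$.

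My plan is to argue by contradiction through a \emph{decomposition of the diagonal}. Assuming representability, every degree-zero $0$-cycle becomes rationally equivalent to one supported on the fixed curve $C\subset X$. Passing to the generic point $X_{\CC(X)}$ and feeding the class of the diagonal point through this hypothesis, then spreading out and clearing a denominator $N$, I expect to obtain an identity
\begin{equation*}
N\,[\Delta]\;=\;Z_1+Z_2\qquad\text{in }\CH^2(X\times X)_\QQ,
\end{equation*}
where $\Delta\subset X\times X$ is the diagonal, $Z_1$ is supported on $X\times C$, and $Z_2$ is supported on $D\times X$ for some divisor $D\subset X$.

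The contradiction comes from letting both sides act on $H^{2,0}(X)$ as correspondences. The diagonal acts as the identity, so the left-hand side is multiplication by $N$. On the right, $(Z_1)_*$ has image contained in the classes pushed forward from the curve $C$, which lie in $H^{1,1}$ and hence carry no $(2,0)$-part; and $(Z_2)_*$ amounts to restricting a holomorphic $2$-form to the $1$-dimensional $D$, which vanishes. Thus $(Z_1)_*$ and $(Z_2)_*$ both kill $H^{2,0}(X)$, forcing $N\cdot\omega=0$ for any $0\neq\omega\in H^{2,0}(X)$ --- contradicting $H^{2,0}(X)\neq0$.

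The step I expect to be the main obstacle is producing the decomposition of the diagonal: turning the soft notion of finite dimensionality into an honest cycle identity requires the spreading-out over $\CC(X)$ and a careful passage from the function-field statement to a class in $\CH^2(X\times X)_\QQ$ supported on the stated loci (this is the Bloch--Srinivas mechanism; Mumford's original route instead tracks the pullback of $\omega$ to symmetric products directly). Once the decomposition is in hand, the Hodge-type vanishing of $(Z_1)_*$ and $(Z_2)_*$ is bookkeeping, but it is exactly here that the existence of the nowhere-vanishing $2$-form of the K3 surface enters decisively.
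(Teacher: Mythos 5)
Your proof is correct in outline, but it takes a genuinely different route from the one the paper records. The paper sketches Mumford's original argument: the regular two-form on $X$ induces forms on the varieties parametrizing effective cycles in $X^n$, which forces every irreducible component of the locus of cycles rationally equivalent to a fixed one to be a \emph{proper} closed subset, and a countable union of such subsets cannot cover $X^n$ because $\CC$ is uncountable. You instead run the Bloch--Srinivas contrapositive: representability produces the decomposition $N[\Delta]=Z_1+Z_2$ in $\CH^2(X\times X)_\QQ$ with $Z_1$ supported on $X\times C$ and $Z_2$ on $D\times X$, and letting both sides act on $H^{2,0}(X)$ gives $N\cdot{\rm id}=0$, contradicting $H^{2,0}(X)\neq 0$; your bookkeeping of why $(Z_1)_*$ and $(Z_2)_*$ kill $H^{2,0}(X)$ is right. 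Both proofs hinge on the two-form, but differently: Mumford geometrizes it on cycle spaces, while you use only its cohomological existence, compressing the transcendental input into the single fact that $\Delta$ acts as the identity on $H^{2,0}(X)$. Two remarks on what each approach buys. First, in your version the uncountability of $\CC$ has not disappeared: it is hidden in the spreading-out step, where the representability hypothesis, stated over $\CC$, must be applied to the generic point over $\overline{\CC(X)}$ (via an embedding $\overline{\CC(X)}\subset\CC$ or a specialization argument); you correctly flag this as the main obstacle. Second, the statements proved differ slightly: you obtain non-representability, the standard modern formulation, whereas Mumford's countability argument directly yields the $\QQ$-linear independence of classes of sufficiently general points, i.e.\ the paper's ``very weak version'' $\dim_\QQ\CH^2(X)_\QQ=\infty$. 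For a K3 surface the two are reconciled using Roitman (torsion-freeness, trivial Albanese), or by noting that any finite set of points lies on a single ample curve, so finite $\QQ$-dimension would force $\CH_0(C)_\QQ\to\CH^2(X)_\QQ$ to be surjective for one curve $C$, which your argument excludes. Your route is the one that generalizes (higher dimension, coniveau statements, and the later formalizations the paper alludes to); Mumford's is self-contained on cycle spaces and gives the stronger geometric form of infinite dimensionality directly.
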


(A priori $\CH^2(X)$ is simply a group, so one needs to explain
what it means that $\CH^2(X)$ is infinite dimensional. A first
very weak version says that $\dim_\QQ\CH^2(X)_\QQ=\infty$. For a
more geometrical and more precise definition of infinite
dimensionality see e.g.\ \cite{Voisin}.)

For $K=\bar\QQ$ the situations is expected to be different. The
Bloch--Beilinson conjectures lead one to the following conjecture
for K3 surfaces.

\begin{conj}\label{conj:BB} If $K$ is a number field or $K=\bar\QQ$, then $\CH^2(X)_\QQ=\QQ$.
\end{conj}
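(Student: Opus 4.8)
The plan is to reduce the statement to the vanishing of the Albanese-type kernel and then to locate that vanishing inside the Bloch--Beilinson framework, keeping in mind throughout that the assertion is a central open conjecture rather than something one can simply establish. First I would pass to a finite extension over which $X$ acquires a rational point $x_0$, possible since $X$ is projective over a number field, so that the degree map yields a splitting $\CH^2(X)_\QQ \cong \QQ \oplus \CH^2(X)_{\hom,\QQ}$, the second summand consisting of degree-zero cycles, which for a surface agree with the homologically trivial ones after tensoring with $\QQ$. Since $\CH^0(X)_\QQ = \QQ$ and $\CH^1(X) = \Pic(X)$ injects into cohomology modulo torsion (a K3 surface has $H^1(X,\ko_X)=0$, hence $\Pic^0(X)=0$ and $\Pic(X)=\NS(X)$), the full content of the conjecture is the vanishing of $\CH^2(X)_{\hom,\QQ}$.

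Next I would invoke the conjectural Bloch--Beilinson filtration $F^\bullet$ on $\CH^2(X)_\QQ$, with $F^0 = \CH^2(X)_\QQ$, $F^1 = \CH^2(X)_{\hom,\QQ}$, $F^2 = \ker(\mathrm{AJ})$, and $F^3 = 0$. The predicted identification of its graded pieces, $F^\nu/F^{\nu+1} \cong \Ext^\nu_{MM(K)}(\mathbf 1, h^{4-\nu}(X)(2))$ in a category $MM(K)$ of mixed motives over $K$, simplifies sharply for a K3 surface: because $H^3(X)=0$ the motive $h^3(X)$ vanishes, so $F^1/F^2 = 0$ and the entire content collapses into $F^2 \cong \Ext^2_{MM(K)}(\mathbf 1, h^2(X)(2))$. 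Splitting $h^2(X)$ into its algebraic part $\NS(X)_\QQ(-1)$ and its transcendental summand, the algebraic contribution is $\Ext^2_{MM(K)}(\mathbf 1, \mathbf 1(1)) = H^2_{\mathrm{mot}}(\Spec K, \QQ(1))$, which vanishes for a field, while the transcendental contribution $\Ext^2_{MM(K)}(\mathbf 1, t_2(X)(2))$ is, over a number field, predicted by Beilinson's conjectures to be torsion and hence to die after $\otimes\,\QQ$.

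The hard part — indeed the reason this remains a conjecture — is that none of the required motivic input is available unconditionally. A K3 surface has geometric genus $p_g = 1$, so Bloch's technique of decomposing the diagonal, which forces finite-dimensionality of $\CH^2$ precisely when $p_g=0$, cannot be applied: one cannot decompose the diagonal of a K3 surface, and its transcendental motive $t_2(X)$ is a genuine weight-two object with $H^{2,0}(X)\neq 0$. This same nonvanishing of $H^{2,0}(X)$ is exactly the mechanism underlying Mumford's theorem over $\CC$, and since $H^{2,0}(X)$ survives base change from $\bar\QQ$ to $\CC$, the mere countability of $\bar\QQ$ does not by itself defeat Mumford's infinite-generation argument. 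Any proof must therefore supply arithmetic input specific to number fields — such as finiteness of the relevant motivic cohomology groups, or a direct injectivity statement for the Beilinson regulator on the transcendental part — and it is precisely the absence of such unconditional results that leaves Conjecture \ref{conj:BB} open.
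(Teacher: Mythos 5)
The statement you were asked to prove is not a theorem of the paper but an open conjecture (the Bloch--Beilinson conjecture specialized to K3 surfaces); the paper offers no proof, and indeed remarks that ``not a single example of a K3 surface $X$ defined over $\bar\QQ$ is known where finite dimensionality of $\CH^2(X)_\QQ$ could be verified.'' Your write-up correctly recognizes this, and that is the right answer: any purported unconditional proof would have been a red flag. Your conditional analysis is also accurate and consistent with the paper's own discussion: the reduction to the vanishing of the degree-zero part $\CH^2(X)_0$ matches the paper's reformulation $\CH^*(X)\cong\widetilde\NS(X_\CC)$; the collapse of the filtration step $F^1/F^2$ because $h^3(X)=0$ (equivalently, trivial Albanese) is exactly the shape of the filtration recorded in \ref{subsect:Bloch}; and the identification of the remaining obstruction with a conjecturally vanishing $\Ext^2$ of the transcendental motive over a number field, together with the observation that $p_g=1$ rules out any diagonal-decomposition argument, is the standard and correct heuristic.

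One sentence deserves sharpening: you write that ``the mere countability of $\bar\QQ$ does not by itself defeat Mumford's infinite-generation argument.'' As literally worded this is backwards --- the countability of $\bar\QQ$ is precisely what makes Mumford's argument inapplicable, since that argument needs a countable union of proper closed subvarieties to fail to cover $X^n$, which only holds over an uncountable field. What you presumably meant (and what is true) is that countability supplies no \emph{positive} information: it blocks Mumford's proof of infinite-dimensionality but does not prove finite-dimensionality, since a countable group can still have infinite rank, and the nonvanishing of $H^{2,0}(X)$ persists over $\bar\QQ$. Stated that way, your conclusion --- that genuine arithmetic input, currently unavailable, is required --- is exactly the status of the problem as the paper presents it.
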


So, if $X$ is a K3 surface defined over $\bar\QQ$, then one
expects $\dim_\QQ\CH^2(X)_\QQ=1$, whereas for the complex K3
surface $X_\CC$ obtained by base change from $X$ one knows
$\dim_\QQ\CH^2(X_\CC)_\QQ=\infty$. To the best of my knowledge not
a single example of a K3 surface $X$ defined over $\bar\QQ$ is
known where finite dimensionality of $\CH^2(X)_\QQ$ could be
verified.

Also note that the Picard group does not change under base change
from $\bar\QQ$ to $\CC$, i.e.\ for $X$ defined over $\bar\QQ$ one
has $\Pic(X)\cong\Pic(X_\CC)$ (see \ref{subsect:Inaba}). But over
the actual field of definition of $X$, which is a number field in
this case, the Picard group can be strictly smaller.

The central argument in Mumford's proof is that an irreducible
component of the closed subset of effective cycles in $X^n$
rationally equivalent to a given cycle must be proper, due to the
existence of a non-trivial regular two-form on $X$, and that a
countable union of those cannot cover $X^n$ if the base field is
not countable. This idea was later formalized and has led to many
more results proving non-triviality of cycles under non-vanishing
hypotheses on the non-algebraic part of the cohomology (see e.g.\
\cite{Voisin}). There is also a more arithmetic approach to
produce arbitrarily many non-trivial classes in $\CH^2(X)$ for a
complex K3 surface $X$ which proceeds via curves over finitely
generated field extensions of $\bar\QQ$ and embeddings of their
function fields into $\CC$. See e.g.\ \cite{GGP}.

The degree of a cycle induces a homomorphism $\CH^2(X)\to\ZZ$ and
its kernel $\CH^2(X)_0$ is the group of homologically (or
algebraically) trivial classes. Thus, the Bloch--Beilinson
conjecture for a K3 surface $X$ over $\bar\QQ$ says that
$\CH^2(X)_0=0$ or, equivalently, that $$\CH^*(X)\cong
\widetilde\NS(X_\CC)\,\hookrightarrow\widetilde H(X_\CC,\ZZ).$$

\subsection{}\label{subsect:BV} The main results presented in my talk were triggered
by the paper of Beauville and Voisin \cite{BV} on a certain
natural subring of $\CH^*(X)$. They show in particular that for a
complex K3 surface $X$ there is a natural class $c_X\in\CH^2(X)$
of degree one with the following properties:

 i) $c_X=[x]$ for
any point $x\in X$ contained in a (singular) rational curve
$C\subset X$.

 ii) $c_1(L)^2\in\ZZ c_X$ for any $L\in\Pic(X)$.

iii) $c_2(X)=24 c_X$.

\medskip

Let us introduce $$R(X):=\CH^0(X)\oplus \CH^1(X)\oplus\ZZ c_X.$$
Then ii) shows that $R(X)$ is a subring of $\CH^*(X)$. A different
way of expressing ii) and iii) together is to say that for any
$L\in\Pic(X)$ the Mukai vector $v^\CH(L)=\ch(L)\sqrt{\td(X)}$ is
contained in $R(X)$ (see \ref{subsect:Muk}). It will be in this
form that the results of Beauville and Voisin can be generalized
in a very natural form to the derived context (Theorem
\ref{thm:sph}).

Note that the cycle map induces an isomorphism
$R(X)\cong\widetilde\NS(X)$ and that for a K3 surface $X$ over
$\bar\QQ$ the Bloch--Beilinson conjecture can be expressed by
saying that base change yields an isomorphism $\CH^*(X)\cong
R(X_\CC)$.

So, the natural filtration $\CH^*(X)_0\subset \CH^*(X)$ (see also
below) with quotient $\widetilde\NS(X)$ admits a split given by
$R(X)$. This can be written as $\CH^*(X)=R(X)\oplus\CH^*(X)_0$ and
seems to be  a special feature of K3 surfaces and
higher-dimensional symplectic varieties. E.g.\ in \cite{BeauHK} it
was conjectured that any relation between $c_1(L_i)$ of line
bundles $L_i$ on an irreducible symplectic variety $X$ in $H^*(X)$
also holds in $\CH^*(X)$. The conjecture was completed to also
incorporate Chern classes of $X$ and proved for low-dimensional
Hilbert schemes of K3 surfaces by Voisin in \cite{VoiHK}. See also
the more recent thesis by Ferretti \cite{Fer} which deals with
double EPW sextics, which are special deformations
four-dimensional Hilbert schemes.

\subsection{}\label{subsect:Bloch} The Bloch--Beilinson
conjectures also predict for  smooth projective varieties $X$ the
existence of a functorial filtration
$$0=F^{p+1}\CH^p(X)\subset F^p\CH^p(X)\subset\ldots\subset
F^1\CH^p(X)\subset F^0\CH^p(X)$$ whose first step $F^1$ is simply
the kernel of the cycle map. Natural candidates for such a
filtration were studied e.g.\ by Green, Griffiths, Jannsen, Lewis,
Murre, and S.\ Saito (see \cite{GG} and the references therein).

For a surface $X$ the interesting part of this filtration is
$0\subset\ker({\rm alb}_X)\subset \CH^2(X)_0\subset\CH^2(X)$. Here
${\rm alb}_X:\CH^2(X)_0\to{\rm Alb}(X)$ denotes the Albanese map.

A cycle $\Gamma\in\CH^2(X\times X)$ naturally acts on cohomology
and on the Chow group. We write $[\Gamma]^{i,0}_*$ for the induced
endomorphism of $H^0(X,\Omega_X^i)$ and $[\Gamma]_*$ for the
action on $\CH^2(X)$. The latter  respects the natural filtration
$\ker({\rm alb}_X)\subset \CH^2(X)_0\subset\CH^2(X)$ and thus
induces an endomorphism ${\rm gr}[\Gamma]_*$ of the graded object
$\ker({\rm alb}_X)\oplus{\rm Alb}(X)\oplus\ZZ$.

The following is  also a consequence of Bloch's conjecture, see
\cite{Bl} or \cite[Ch.\ 11]{Voisin}, not completely unrelated to
Conjecture \ref{conj:BB}.

\begin{conj}\label{conj:BBFiltr}
$[\Gamma]_*^{2,0}=0$ if and only if ${\rm gr}[\Gamma]_*=0$ on
$\ker({\rm alb}_X)$.
\end{conj}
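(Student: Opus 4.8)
The plan is to leverage the Bloch–Beilinson philosophy in the form already available: namely that the nontriviality of the kernel of $\mathrm{alb}_X$ on $\CH^2(X)_0$ should be detected entirely by the action of correspondences on the space of holomorphic two-forms $H^0(X,\Omega_X^2)$. This is exactly the content of the classical Bloch conjecture on surfaces of geometric genus zero, and the statement above is its correspondence-theoretic reformulation. I would first recall that one direction is essentially formal. If $\mathrm{gr}[\Gamma]_* = 0$ on $\ker(\mathrm{alb}_X)$, one wants $[\Gamma]_*^{2,0} = 0$; this follows because the action of $\Gamma$ on the transcendental part of $H^2$, which is where $H^{2,0}$ sits, is governed by its action on the \emph{nontrivial} part of the Chow group via the cycle class map and a spreading-out argument showing that the holomorphic two-form pairs nontrivially with the deepest part of the Chow filtration. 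So the bulk of the work is the converse.

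\smallskip

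For the converse, I would assume $[\Gamma]_*^{2,0} = 0$ and seek to conclude $\mathrm{gr}[\Gamma]_* = 0$ on $\ker(\mathrm{alb}_X)$. The strategy I would follow is the standard Bloch–Srinivas \emph{decomposition of the diagonal} argument. Concretely, the hypothesis $[\Gamma]_*^{2,0}=0$ should, after a Hodge-theoretic analysis, force the restriction of $\Gamma$ (or a suitable multiple, working with $\QQ$-coefficients as Conjecture \ref{conj:BB} does) to act trivially on the transcendental lattice $T(X)_\QQ$, which is an irreducible Hodge structure as noted in \ref{subsect:NST}. One then spreads the cycle $\Gamma$ over a base and applies the Bloch–Srinivas method: triviality of the transcendental action lets one write $\Gamma$ modulo rational equivalence as a sum of a component supported on $D \times X$ for a divisor $D$ and a component supported on $X \times Z$ for a subvariety $Z$ of dimension zero. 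Each such supported piece acts trivially on $\ker(\mathrm{alb}_X)$: a piece supported on $X\times Z$ factors through $\CH^0$ of points and hence lands in the image of $R(X)$ rather than the deepest filtration step, while a piece supported on $D\times X$ factors through the Chow group of a curve, whose $\CH^2$-action is captured by the Albanese and therefore vanishes on $\ker(\mathrm{alb}_X)$.

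\smallskip

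The hard part will be the implication from $[\Gamma]^{2,0}_*=0$ to the vanishing of the transcendental action, since a priori $[\Gamma]^{2,0}_*$ records only the effect on the one-dimensional line $H^{2,0}(X)$, whereas the diagonal-decomposition argument needs control over the full transcendental Hodge structure $T(X)_\QQ$. Here the irreducibility of $T(X)_\QQ$ is the key leverage: because it is an irreducible weight-two Hodge structure, any Hodge endomorphism is determined by its effect on the line $H^{2,0}$, so that $[\Gamma]^{2,0}_* = 0$ propagates to the whole transcendental part. This is precisely where the hypothesis becomes strong enough, and identifying this as the crux is, in my view, the conceptual heart of the argument. I should emphasize that this is a \emph{conjectural} statement equivalent to Bloch's conjecture and not something one proves unconditionally; what I am sketching is the structure of the expected equivalence and the reductions that make the two formulations match, rather than an unconditional proof of either side.
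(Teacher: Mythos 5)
The statement you are addressing is labeled a \emph{conjecture} in the paper, and the paper offers no proof of it: it records only that the statement is a consequence of Bloch's conjecture, that it is known to follow from the Bloch--Beilinson conjecture for $X\times X$ when $X$ and $\Gamma$ are defined over $\bar\QQ$, and that ``otherwise, very little is known about it.'' Your closing caveat---that you are sketching the structure of an expected equivalence rather than an unconditional proof---is therefore the correct assessment of what is possible. But the sketch itself misplaces its key tool, and the error is worth isolating because it is exactly the point where the conjecture resists proof.

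Your use of the Bloch--Srinivas decomposition of the diagonal runs the implication backwards. That method takes \emph{Chow-theoretic} input (for instance, that $\CH_0$ is supported on a divisor, or is representable) and derives from it a decomposition of the diagonal and then \emph{cohomological} consequences. It cannot take the cohomological hypothesis $[\Gamma]^{2,0}_*=0$ and produce a decomposition of $\Gamma$ \emph{modulo rational equivalence}, as your second paragraph claims. What that hypothesis yields---via your correct observation that irreducibility of $T(X)_\QQ$ propagates the vanishing from the line $H^{2,0}(X)$ to all of $T(X)_\QQ$---is only a decomposition of the \emph{cohomology class} of $\Gamma$ into pieces supported on $D\times X$ and $X\times Z$, i.e.\ a statement modulo homological equivalence. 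Lifting such a decomposition from homological to rational equivalence on $X\times X$ is precisely the content of the Bloch--Beilinson conjecture for $X\times X$; this is why the paper says the conjecture follows from it over $\bar\QQ$, and why the statement remains open over $\CC$. So the crux is not, as you suggest, the propagation from $H^{2,0}$ to $T(X)_\QQ$ (that step is standard and your argument for it is fine); it is the lifting of the cohomological decomposition of the correspondence to a Chow-theoretic one, for which no technique is known. Ironically, the place where Bloch--Srinivas-type spreading arguments genuinely apply is the direction you dismiss as ``essentially formal'': the known implication that triviality of ${\rm gr}[\Gamma]_*$ on $\ker({\rm alb}_X)$ forces $[\Gamma]^{2,0}_*=0$, which is the correspondence version of the paper's remark that $\Phi^H\ne{\rm id}$ on $T(X)$ implies $\Phi^\CH\ne{\rm id}$ on $\CH^*(X)_0$.
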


It is known that this conjecture is implied by the
Bloch--Beilinson conjecture for $X\times X$ when $X$ and $\Gamma$
are defined over $\bar\QQ$. But otherwise, very little is known
about it. Note that the analogous statement $[\Gamma]_*^{1,0}=0$
if and only if ${\rm gr}[\Gamma]_*=0$ on ${\rm Alb}(X)$ holds true
by definition of the Albanese.

For K3 surfaces the Albanese map is trivial and so the
Bloch--Beilinson filtration for K3 surfaces is simply $0\subset
\ker({\rm alb}_X)=\CH^2(X)_0\subset\CH^2(X)$. In particular
Conjecture \ref{conj:BBFiltr} for a K3 surface becomes:
$[\Gamma]_*^{2,0}=0$ if and only if ${\rm gr}[\Gamma]_*=0$ on
$\CH^2(X)_0$. In this form the conjecture seems out of reach for
the time being, but the following special case seems more
accessible and we will explain in Section \ref{sect:Aut} to what
extend derived techniques can be useful to answer it.

\begin{conj}\label{conj:BlochAut}
Let $f\in\Aut(X)$ be a symplectomorphism of a complex projective
K3 surface $X$, i.e.\ $f^*={\rm id}$ on $H^{2,0}(X)$. Then
$f^*={\rm id}$ on $\CH^2(X)$.
\end{conj}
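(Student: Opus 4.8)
The plan is to prove Conjecture \ref{conj:BlochAut} by exploiting the rigidity of the derived category together with the Bloch--Beilinson type splitting $\CH^*(X)=R(X)\oplus\CH^*(X)_0$ discussed in \ref{subsect:BV}. A symplectomorphism $f$ acts by pullback on $\widetilde H(X,\ZZ)$ and, by the derived Global Torelli theorem \ref{subsect:GT}ii), the induced Hodge isometry is realized by an autoequivalence $\Phi$ of $\Db(X)$ (indeed by $f^*$ itself via its Fourier--Mukai kernel $\ko_{\Gamma_f}$). The key structural input is that $f^*={\rm id}$ on $H^{2,0}(X)$ forces $f^*$ to act trivially on the transcendental lattice $T(X)_\QQ$, since $T(X)_\QQ$ is an irreducible Hodge structure (see \ref{subsect:NST}) and any Hodge isometry fixing the generating line $H^{2,0}$ must restrict to a scalar, necessarily $\pm1$, and in fact $+1$ by orientation. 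Thus on cohomology the interesting part of $f^*$ is concentrated on the algebraic lattice $\widetilde\NS(X)$.

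First I would translate the problem into the language of the main Theorem \ref{thm:sph}, whose content is that the action of spherical twists on $\CH^*(X)$ is governed by the Beauville--Voisin subring $R(X)$. The strategy is to show that, modulo the kernel of the cohomological representation, $f^*$ differs from a composition of squares of spherical twists and an automorphism acting trivially on $\CH^2(X)_0$; by Bridgeland's conjectural description of this kernel as generated by squares of spherical twists (mentioned in the abstract and in \ref{subsect:ker}), and by Theorem \ref{thm:sph} controlling how such twists act on cycles, one reduces the statement on $\CH^2(X)$ to the statement on $\widetilde\NS(X)$, i.e.\ to $R(X)$. On $R(X)\cong\widetilde\NS(X)$ the action is purely cohomological, and since $f^*$ preserves the generic point class $c_X$ (it sends rational curves to rational curves, so fixes $[x]=c_X$ by property \ref{subsect:BV}i)), one concludes $f^*={\rm id}$ on the $R(X)$ summand.

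The decisive step is then to control the action on the transcendental piece $\CH^2(X)_0$. Here I would use Conjecture \ref{conj:BBFiltr} in the form relevant to K3 surfaces: since $f^*={\rm id}$ on $H^{2,0}(X)$, one has $[\Gamma_f-\Gamma_{\rm id}]^{2,0}_*=0$, and the conjecture would yield ${\rm gr}[\Gamma_f]_*={\rm id}$ on $\CH^2(X)_0=\ker({\rm alb}_X)$. For K3 surfaces the filtration collapses to a single step, so ${\rm gr}[\Gamma_f]_*$ is just $[\Gamma_f]_*$ on $\CH^2(X)_0$, and we would be done. The main obstacle is precisely that Conjecture \ref{conj:BBFiltr} is itself open; a genuine unconditional proof cannot simply invoke it. Therefore the real work, which I expect to be the hard part, is to replace this appeal by the derived-categorical input: one must show that the autoequivalence inducing $f^*$ can be written, up to squares of spherical twists, as one acting trivially on $\CH^2(X)_0$, and that squares of spherical twists act trivially on $\CH^2(X)_0$ as well. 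This last point is exactly where Theorem \ref{thm:sph} enters, and verifying that the twist contributions land in $R(X)$ rather than in the transcendental part $\CH^2(X)_0$ is the crux of the argument.
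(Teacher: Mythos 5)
Your reduction is set up correctly and matches the paper's framing: since $T(X)_\QQ$ is an irreducible Hodge structure, $f^*={\rm id}$ on $H^{2,0}(X)$ forces $f^*={\rm id}$ on $T(X)$, and since $f$ maps rational curves to rational curves it fixes the Beauville--Voisin class $c_X$, so everything hinges on the action on $\CH^2(X)_0$. (One slip: $f^*$ is \emph{not} the identity on the whole summand $R(X)$ --- it can act non-trivially on $\Pic(X)$; a symplectic involution acts by $-1$ on the sublattice $E_8(-2)\subset\NS(X)$ of the generic invariant K3 --- but only the degree-two part $\ZZ c_X$ of $R(X)$ is relevant here, and that is indeed fixed.) The genuine gap is the step you yourself flag as ``the real work'': exhibiting an autoequivalence $\Phi$, built from spherical twists, whose cohomological action equals that of $f$. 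You offer no mechanism for this, and neither Bridgeland's Conjecture \ref{conj:Bridg} nor Bloch's Conjecture \ref{conj:BBFiltr} can supply it: both are open, so invoking them cannot yield a proof, and Conjecture \ref{conj:Bridg} is moreover beside the point --- it describes the \emph{kernel} of $\rho^H$, whereas Theorem \ref{thm:sameker} already disposes of the kernel ($\ker(\rho^H)=\ker(\rho^\CH)$, no description of its generators needed). What is actually required is a statement about the \emph{image}: that the Hodge isometry $f_*$, restricted to $\widetilde\NS(X)$, lies in the subgroup realized by compositions of spherical twists.

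The paper fills exactly this gap by lattice theory rather than by derived-category considerations. Since $f_*={\rm id}$ on $T(X)$, it acts trivially on the discriminant group of $\widetilde\NS(X)$, and Kneser's theorem, in the form of Equation~(\ref{eqn:W}), then writes $f_*$ --- a priori after composing with the reflection $s_{\delta_0}$, $\delta_0=(1,0,-1)$, to correct the spinor norm, a case excluded a posteriori by an orientation argument --- as a product $\prod s_{\delta_i}$ of $(-2)$-reflections, each of which is $T^H_{E_i}$ for some spherical object $E_i$. Theorem \ref{thm:sameker} then gives $f^\CH=\prod T^\CH_{E_i}$, and Theorem \ref{thm:sph} shows the right-hand side acts trivially on $\CH^2(X)_0$. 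Crucially, Kneser's theorem requires $\rk_2(\NS(X))\geq 4$ and $\rk_3(\NS(X))\geq 3$, which is precisely why the paper proves the statement only under these hypotheses (Theorem \ref{thm:Kneserappl}), and why it fails to cover, e.g., the generic K3 surface with a symplectic involution, whose N\'eron--Severi lattice reduces mod $2$ to one of the quadratic forms excluded in Kneser's result. In short: your plan correctly locates the difficulty, but the missing ingredient is Kneser's generation-by-reflections theorem, and with it comes an unavoidable restriction on $\NS(X)$; the conjecture as stated remains open.
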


\begin{remark}
Note that the converse is true: If $f\in\Aut(X)$ acts as ${\rm
id}$ on $\CH^2(X)$, then $f$ is a symplectomorphism. This is
reminiscent of a consequence of the Global Torelli theorem which
for a complex projective K3 surface $X$ states:

$\bullet$ $f={\rm id}$ if and only if $f^*={\rm id}$ on the Chow
ring(!) $\CH^*(X)$.
\end{remark}

\section{Derived category}

The Chow group $\CH^*(X)$ is the space of cycles divided by
rational equi\-valence. Equivalently, one could take the abelian
or derived category of coherent sheaves on $X$ and pass to the
Grothendieck K-groups. It turns out that considering the more
rigid structure of a category that lies behind the Chow group can
lead to new insight. See \cite{FM} for a general introduction to
derived categories and for more references to the original
literature.

\subsection{}\label{subsect:Muk} For a K3 surface $X$ over a field $K$ the category $\coh(X)$ of
coherent sheaves on $X$ is a $K$-linear abelian category and its
\emph{bounded derived category}, denoted $\Db(X)$, is a $K$-linear
triangulated category.

If $E^\bullet$ is an object of $\Db(X)$, its \emph{Mukai vector}
$v(E^\bullet)=\sum (-1)^iv(E^i)=\sum
(-1)^iv(\kh^i(E^\bullet))\in\widetilde \NS(X)\subset\widetilde
H(X,\ZZ)$ is well defined. By abuse of notation, we will write the
Mukai vector as a map $$v:\Db(X)\to\widetilde \NS(X).$$ Since the
Chern character of a coherent sheaf and the Todd genus of $X$
exist as classes in $\CH^*(X)$, the Mukai vector with values in
$\CH^*(X)$ can also be  defined. This will be written as
$$v^\CH:\Db(X)\to\CH^*(X).$$
(It is a special feature of K3 surfaces that the Chern character
really is integral.)

Note that $\CH^*(X)$ can also be understood as the Grothendieck
K-group of the abelian category $\coh(X)$ or of the triangulated
category $\Db(X)$, i.e.\ $K(X)\cong K(\coh(X))\cong K(\Db(X))\cong
\CH^*(X)$. (In order to exclude any torsion phenomena we assume
here that $K$ is algebraically closed, i.e. $K=\CC$ or
$K=\bar\QQ$, or, alternatively, pass to the associated
$\QQ$-vector spaces.)

Clearly, the lift of a class in $\CH^*(X)$ to an object in
$\Db(X)$ is never unique. Of course, for certain classes there are
natural choices, e.g.\ $v^\CH(L)$ naturally lifts to $L$ which is
a spherical object (see below).

\subsection{}\label{subsect:FM} Due to a result of Orlov, every $K$-linear equivalence
$\Phi:\Db(X)\congpf\Db(X')$ between the derived categories of two
smooth projective varieties is a Fourier--Mukai transform, i.e.\
there exists a unique object $\ke\in\Db(X\times X')$ such that
$\Phi$ is isomorphic to the functor
$\Phi_\ke=p_*(q^*(~~)\otimes\ke)$. Here $p_*$, $q^*$, and
$\otimes$ are derived functors. It is known that if $X$ is a K3
surface also $X'$ is one.

It would be very interesting to use Orlov's result to deduce the
existence of objects in $\Db(X\times X')$ that are otherwise
difficult to describe. However, we are not aware of any
non-trivial example of a functor that can be shown to be an
equivalence, or even just fully faithful, without actually
describing it as a Fourier--Mukai transform.
\medskip

Here is a list of essentially all known (auto)equivalences for K3
surfaces:

i) Any isomorphism $f:X\congpf X'$ induces an exact equivalence
$f_*:\Db(X)\congpf\Db(X')$ with Fourier--Mukai kernel the
structure sheaf $\ko_{\Gamma_f}$ of the graph $\Gamma_f\subset
X\times X'$ of $f$.

ii) The tensor product $L\otimes(~~)$ for a line bundle
$L\in\Pic(X)$ defines an autoequivalence of $\Db(X)$ with
Fourier--Mukai kernel $\Delta_*L$.

iii) An object $E\in\Db(X)$ is called \emph{spherical} if
$\Ext^*(E,E)\cong H^*(S^2,K)$ as graded vector spaces. The
\emph{spherical twist} $$T_E:\Db(X)\congpf\Db(X)$$ associated with
it is the Fourier--Mukai equivalence whose kernel is given as the
cone of the trace map $E^*\boxtimes E\to (E^*\boxtimes
E)|_\Delta\congpf\Delta_*(E^*\otimes E)\to \ko_\Delta$. (For
example of spherical objects see \ref{subsect:Sph}.)

iv) If $X'$ is a fine projective moduli space of stable sheaves
and $\dim(X')=2$, then the universal family $\ke$ on $X\times X'$
(unique up to a twist with a line bundle on $X'$) can be taken as
the kernel of an equivalence $\Db(X)\congpf\Db(X')$.

\subsection{} Writing an equivalence as a Fourier--Mukai transform
allows one to associate directly to any autoequivalence
$\Phi:\Db(X)\congpf\Db(X)$ of a complex K3 surface $X$ an
isomorphism
$$\Phi^H:\widetilde H(X,\ZZ)\congpf\widetilde H(X,\ZZ)$$ which in
terms of the Fourier--Mukai kernel $\ke$ is given by
$\alpha\mapsto p_*(q^*\alpha.v(\ke))$. As was observed by Mukai,
this isomorphism is defined over $\ZZ$ and not only over $\QQ$.
Moreover, it preserves the Mukai pairing and the natural weight
two Hodge structure, i.e.\ it is an integral Hodge isometry of
$\widetilde H(X,\ZZ)$. As above, $v(\ke)$ denotes the Mukai vector
$v(\ke)=\ch(\ke)\sqrt{{\rm td}(X\times X)}$.

Clearly, the latter makes also sense in $\CH^*(X\times X)$ and so
one can as well associate to the equivalence $\Phi$ a group
automorphism
$$\Phi^\CH:\CH^*(X)\congpf\CH^*(X).$$

The reason why the usual Chern character is replaced by the Mukai
vector is the Grothendieck--Riemann--Roch formula. With this
definition of $\Phi^H$ and $\Phi^\CH$ one finds that
$\Phi^H(v(E))=v(\Phi(E))$ and $\Phi^\CH(v^\CH(E))=v^\CH(\Phi(E))$
for all $E\in\Db(X)$.

Note that $\Phi^H$ and $\Phi^\CH$ do not preserve, in general,
neither the multiplicative  structure nor the grading of
$\widetilde H(X,\ZZ)$ resp.\ $\CH^*(X)$.

The derived category $\Db(X)$ is difficult to describe in concrete
terms. Its group of autoequivalences, however, seems more
accessible. So let $\Aut(\Db(X))$ denote the group of all
$K$-linear exact equivalences $\Phi:\Db(X)\congpf\Db(X)$ up to
isomorphism. Then $\Phi\mapsto\Phi^H$ and $\Phi\mapsto \Phi^\CH$
define the two representations
$$\rho^H:\Aut(\Db(X))\to\OO(\widetilde H(X,\ZZ))\phantom{MM}
{\rm
and}\phantom{MM}\rho^\CH:\Aut(\Db(X))\to\Aut(\CH^*(X)).$$

Here, $\OO(\widetilde H(X,\ZZ))$ is the group of all integral
Hodge isometries of the weight two Hodge structure defined on the
Mukai lattice $\widetilde H(X,\ZZ)$  and $\Aut(\CH^*(X))$ denotes
simply the group of all automorphisms of the additive group
$\CH^*(X)$.

Although $\CH^*(X)$  is a much bigger group than $\widetilde
H(X,\ZZ)$, at least over $K=\CC$, both representations carry
essentially the same information. More precisely one can prove
(see \cite{HSp}):

\begin{thm}\label{thm:sameker}
$\ker(\rho^H)=\ker(\rho^\CH)$.
\end{thm}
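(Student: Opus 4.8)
The plan is to prove the two inclusions $\ker(\rho^\CH)\subseteq\ker(\rho^H)$ and $\ker(\rho^H)\subseteq\ker(\rho^\CH)$ separately, using throughout that $\Phi^H$ and $\Phi^\CH$ are induced by one and the same Fourier--Mukai kernel $\ke$ (via $v(\ke)$ and $v^\CH(\ke)$), and that the cycle map $\CH^*(X)\to\widetilde H(X,\ZZ)$ is equivariant for the two actions with image exactly the algebraic classes $\widetilde\NS(X)$.

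For $\ker(\rho^\CH)\subseteq\ker(\rho^H)$, the direction deducing cohomology from cycles, suppose $\Phi^\CH=\id$. Equivariance and surjectivity of the cycle map onto $\widetilde\NS(X)$ immediately give $\Phi^H=\id$ on $\widetilde\NS(X)$. To reach the transcendental part I would use two inputs. First, a Hodge-theoretic lemma: since $T(X)_\QQ$ is an irreducible Hodge structure, its algebra of Hodge endomorphisms is a field that embeds into $\CC$ through the action on $H^{2,0}(X)$, so any Hodge isometry of $\widetilde H(X,\ZZ)$ fixing $H^{2,0}(X)$ must be the identity on $T(X)$. Second, the known (Mumford) direction of Conjecture~\ref{conj:BBFiltr}: a correspondence acting trivially on $\CH^2(X)_0$ acts trivially on $H^0(X,\Omega_X^2)=H^{2,0}(X)$. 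Applying this to the class attached to $\Phi$ minus the diagonal, the hypothesis $\Phi^\CH=\id$ forces $\Phi^H=\id$ on $H^{2,0}(X)$, and the lemma then upgrades this to $\Phi^H=\id$ on $T(X)$. Together with the algebraic part this gives $\Phi^H=\id$.

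For the reverse inclusion $\ker(\rho^H)\subseteq\ker(\rho^\CH)$ I would exploit the Beauville--Voisin splitting $\CH^*(X)=R(X)\oplus\CH^2(X)_0$. On $R(X)$, which maps isomorphically to $\widetilde\NS(X)$, the argument is formal: since $\Phi$ preserves sphericality, Theorem~\ref{thm:sph} (the generalization of Beauville--Voisin to spherical objects) ensures that the Mukai vectors $v^\CH(\Phi(L))$ and $v^\CH(\Phi(\ko_X))$ again lie in $R(X)$, so that $\Phi^\CH$ preserves $R(X)$; under $R(X)\cong\widetilde\NS(X)$ this restricted action is identified with $\Phi^H|_{\widetilde\NS(X)}=\id$, whence $\Phi^\CH=\id$ on $R(X)$. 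It remains to treat $\CH^2(X)_0$. Here the hypothesis $\Phi^H=\id$ says precisely that every point object $\ko_x$ is sent to an object with cohomological Mukai vector again $(0,0,1)$, so that $v^\CH(\Phi(\ko_x))=(0,0,[x]+\beta_x)$ with $\beta_x\in\CH^2(X)_0$; the task is to show $\beta_x=0$, i.e. that $\Phi^\CH$ acts trivially on $\CH^2(X)_0$.

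This last step is the main obstacle: it is exactly the converse, Bloch-type, direction of Conjecture~\ref{conj:BBFiltr} for the correspondence attached to $\Phi$, which is not available in general. The plan to circumvent it is to reduce $\ker(\rho^H)$ to an explicit list of generators — the double shift and the squared spherical twists $T_E^2$ — and to compute the induced map on $\CH^2(X)_0$ for each, showing via the Beauville--Voisin relations that point classes are preserved. The delicate point is that this reduction to generators is itself only conjectural (Bridgeland), so an unconditional argument requires either an independent, equivalence-specific proof that $x\mapsto\beta_x$ vanishes in $\CH^2(X)_0$, or a specialization argument through a model over $\bar\QQ$. I expect essentially all the difficulty of the theorem to be concentrated in this control of the infinite-dimensional group $\CH^2(X)_0$.
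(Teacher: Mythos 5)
Your easy inclusion, $\ker(\rho^\CH)\subseteq\ker(\rho^H)$, is fine: cycle-map equivariance handles $\widetilde\NS(X)$, the Mumford--Bloch--Srinivas direction (applied to the $\CH^2$-component of $v^\CH(\ke)-v^\CH(\ko_\Delta)$, where $\ke$ is the Fourier--Mukai kernel) gives triviality of $\Phi^H$ on $H^{2,0}(X)$, and irreducibility of $T(X)_\QQ$ upgrades this to the identity on $T(X)$, hence on all of $\widetilde H(X,\ZZ)$. This is exactly the ``well-known arguments'' direction that the paper itself alludes to when it says that $\Phi^H\neq{\rm id}$ on $T(X)$ implies $\Phi^\CH\neq{\rm id}$ on $\CH^*(X)_0$.

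The reverse inclusion, which is the actual content of the theorem, is not proved in your proposal, and you say so yourself; let me pinpoint the two defects. First, a circularity: you invoke Theorem~\ref{thm:sph} for the spherical objects $\Phi(L)$ and $\Phi(\ko_X)$, which are in general not bundles. According to the paper (see the discussion following Theorem~\ref{thm:sph}), Theorem~\ref{thm:sph} for non-bundle spherical objects is itself deduced from Theorem~\ref{thm:sameker}: one applies Theorem~\ref{thm:sameker} to $T_E\comp T_{E_0}^{-1}\in\ker(\rho^H)$, where $E_0$ is a spherical bundle with $v(E_0)=v(E)$, and then reads off $v^\CH(E)$ from the Chow-theoretic action of the twist. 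So the statement you cite presupposes the one you are proving (and would also smuggle in the hypothesis $\rho(X)\geq2$, which Theorem~\ref{thm:sameker} does not carry). Second, and decisively, your treatment of $\CH^2(X)_0$ rests on writing every element of $\ker(\rho^H)$ as a word in the double shift and the squared twists $T_E^2$; that reduction is precisely Conjecture~\ref{conj:Bridg}, which the paper states as open. Ironically, the per-generator verification you defer is the trivial part: the kernel of $T_E$ is the cone of $E\dual\boxtimes E\to\ko_\Delta$, and the product correspondence $E\dual\boxtimes E$ annihilates $\CH^2(X)_0$ for degree reasons, so every $T_E$, and the shift, acts trivially on $\CH^2(X)_0$. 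Your whole plan therefore stands or falls with the open conjecture. The survey itself offers no proof to compare with---it refers to \cite{HSp}---but its logical remarks make clear that in \cite{HSp} Theorem~\ref{thm:sameker} is established first, unconditionally and without Bridgeland's conjecture, and Theorems~\ref{thm:sph} and~\ref{thm:Rpres} are then derived from it; your proposed deduction runs in the opposite, unavailable, direction.
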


In the following we will explain what is known about this kernel
and the images of the representations $\rho^H$ and $\rho^\CH$.

\subsection{}\label{subsect:ker}
Due to the existence of the many spherical objects in $\Db(X)$ and
their associated spherical twists, the kernel
$\ker(\rho^H)=\ker(\rho^\CH)$ has a rather intriguing structure.
Let us be a bit more precise: If $E\in\Db(X)$ is spherical, then
$T_E^H$ is the reflection $s_\delta$ in the hyperplane orthogonal
to $\delta:=v(E)$. Hence, the square $T_E^2$ is an element in
$\ker(\rho^H)$ which is easily shown to be non-trivial.

Due to the existence of the many spherical objects on any K3
surface, e.g.\ all line bundles are spherical, and the complicated
relations between them, the group generated by all $T_E^2$ is a
very interesting object. In fact, conjecturally $\ker(\rho^H)$ is
generated by the $T_E^2$'s and the double shift. This and the
expected relations between the spherical twists are expressed by
the following conjecture of Bridgeland \cite{Br}.

\begin{conj}\label{conj:Bridg}
$\ker(\rho^H)=\ker(\rho^\CH)\cong \pi_1(\kp_0(X))$.
\end{conj}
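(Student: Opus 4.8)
The plan is to route both sides of the asserted isomorphism through Bridgeland's space of stability conditions $\Stab(X)$ on $\Db(X)$, which provides the geometric bridge between the categorical group $\ker(\rho^H)$ and the topological invariant $\pi_1(\kp_0(X))$. Recall that a stability condition $\sigma=(Z,\mathcal P)$ consists of a slicing $\mathcal P$ together with a central charge $Z\colon\widetilde\NS(X)\to\CC$, and that the map $\mathcal Z$ recording the central charge is a local homeomorphism onto an open subset of $\widetilde\NS(X)_\CC$. Let $\Stab^\dagger(X)$ be the connected component containing the \emph{geometric} stability conditions, those for which every skyscraper $\ko_x$ is stable of one fixed phase. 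Bridgeland \cite{Br} shows that $\mathcal Z$ restricts to a covering map
$$\mathcal Z\colon\Stab^\dagger(X)\to\kp_0(X),$$
the removal of the walls $\delta^\perp$ with $\delta\in\widetilde\Delta_X$ being forced precisely by the spherical classes. By Theorem \ref{thm:sameker} it is enough to treat $\ker(\rho^H)$, so I would argue entirely on the cohomological side and transport the conclusion to $\rho^\CH$ at the end.

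The second ingredient is the action of $\Aut(\Db(X))$ on $\Stab(X)$: an autoequivalence $\Phi$ sends $(Z,\mathcal P)$ to the stability condition with central charge $Z\circ(\Phi^H)^{-1}$ and slicing $\Phi(\mathcal P)$, so $\mathcal Z$ intertwines this action with the action of $\OO(\widetilde H(X,\ZZ))$ on $\kp_0(X)$ via $\rho^H$. Hence, if $\Phi\in\ker(\rho^H)$ then $\Phi^H=\id$ and $\Phi$ covers the identity of $\kp_0(X)$; granting that $\Phi$ preserves the component $\Stab^\dagger(X)$, it is a deck transformation of $\mathcal Z$. Since deck transformations of a connected covering act freely, it remains to establish two points: (A) every element of $\ker(\rho^H)$ preserves $\Stab^\dagger(X)$ and the resulting map $\ker(\rho^H)\to\mathrm{Deck}(\mathcal Z)$ is bijective; and (B) $\Stab^\dagger(X)$ is simply connected, so that $\mathrm{Deck}(\mathcal Z)\cong\pi_1(\kp_0(X))$. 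Together these give $\ker(\rho^H)\cong\pi_1(\kp_0(X))$, and Theorem \ref{thm:sameker} yields the same for $\ker(\rho^\CH)$.

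For (A) the guiding principle is that the generators of $\pi_1(\kp_0(X))$ from \ref{subsect:cone} lift to autoequivalences already recognizable in $\ker(\rho^H)$: the loop induced by the $\CC^*$-action lifts to the double shift $[2]$, and a small loop around a wall $\delta^\perp$ with $\delta=v(E)$ lifts to the square $T_E^2$ of the associated spherical twist. Since $[2]$ acts as $\id$ on $\widetilde H(X,\ZZ)$ and $T_E^H=s_\delta$ squares to $\id$, both lie in $\ker(\rho^H)$, matching the description in \ref{subsect:ker}. I would verify that these autoequivalences preserve $\Stab^\dagger(X)$ by tracking a stability condition around the corresponding loop, and that they exhaust $\ker(\rho^H)$ using the presentation of $\pi_1(\kp_0(X))$; faithfulness of the action then provides the injectivity of $\ker(\rho^H)\to\mathrm{Deck}(\mathcal Z)$.

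The hard part is (B), the simple-connectivity of $\Stab^\dagger(X)$, which is in truth the full content of the conjecture and for which no general method is known. The natural attack exploits the wall-and-chamber structure of $\Stab^\dagger(X)$: the geometric chamber is contractible, each wall is cut out by the spherical objects whose central charge degenerates along it, and one tries to contract an arbitrary loop by sweeping it across walls while absorbing the monodromy into the spherical twists above. The genuine difficulty is to rule out any unexpected topology of $\Stab^\dagger(X)$ far from the geometric chamber --- equivalently, to show there are no deck transformations of $\mathcal Z$ beyond those generated by the $T_E^2$ and $[2]$. Gaining such global control of $\Stab^\dagger(X)$, and in particular understanding the connectivity of its non-geometric locus, is exactly where the argument currently stalls.
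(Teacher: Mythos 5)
The statement you set out to prove is Conjecture \ref{conj:Bridg}: it is an \emph{open conjecture} of Bridgeland, and the paper contains no proof of it. The only part of it that is a theorem is the equality $\ker(\rho^H)=\ker(\rho^\CH)$ (Theorem \ref{thm:sameker}, proved in \cite{HSp}); the isomorphism with $\pi_1(\kp_0(X))$ is exactly what is conjectural. Accordingly, your write-up cannot be (and is not) a proof, as you yourself concede: your step (B), the simple connectedness of the distinguished component $\Stab^\dagger(X)$, is one open half of the conjecture, and the first part of your step (A) --- that every element of $\ker(\rho^H)$ preserves $\Stab^\dagger(X)$ --- is the other, since a priori an autoequivalence acting trivially on cohomology could move the distinguished component to some other component of the stability manifold. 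These two points are not technical lemmas left to the reader; they are the entire content of the conjecture.

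That said, your reduction is a faithful account of the actual state of the art and matches what the paper records in \ref{subsect:ker}: Bridgeland constructs a group homomorphism $\pi_1(\kp_0(X))\to\ker(\rho^H)$ by lifting the loop around a wall $\delta^\perp$, $\delta=v(E)$, to the squared spherical twist $T_E^2$, and the loop coming from the $\CC^*$-action to the double shift; injectivity of this homomorphism is equivalent to the simple connectedness of the distinguished component $\Sigma(X)=\Stab^\dagger(X)$, and surjectivity would follow if $\Sigma(X)$ were the only connected component of the space of stability conditions. Your arrow points the other way, from $\ker(\rho^H)$ to the deck transformation group of the covering $\Stab^\dagger(X)\to\kp_0(X)$, but the two formulations trade in precisely the same pair of open problems. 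So: you have correctly identified the framework, correctly routed the Chow-theoretic half through Theorem \ref{thm:sameker}, and correctly located where the argument stalls --- but what remains is not a gap in your exposition, it is the conjecture itself, and the paper claims nothing stronger.
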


For the definition of $\kp_0(X)$ see \ref{subsect:cone}. The
fundamental group of $\kp_0(X)$ is generated by loops around each
$\delta^\perp$ and the generator of $\pi_1(\kp(X))\cong\ZZ$. The
latter is naturally lifted to the auto\-equi\-valence given by the
double shift $E\mapsto E[2]$.

Since each $(-2)$-vector $\delta$ can be written as $\delta=v(E)$
for some spherical object, one can lift the loop around
$\delta^\perp$ to $T^2_E$. However, the spherical object $E$ is by
no means unique. Just choose any other spherical object $F$ and
consider $T_F^2(E)$ which has the same Mukai vector as $E$. Even
for a Mukai vector $v=(r,\ell,s)$ with $r>0$ there is in general
more than one spherical bundle(!) $E$ with $v(E)=v$ (see
\ref{subsect:Sph}).

Nevertheless, Bridgeland does construct a group homomorphism
$$\pi_1(\kp_0(X))\to\ker(\rho^H)\subset \Aut(\Db(X)).$$  The injectivity
of this map is equivalent to the simply connectedness of the
distinguished component $\Sigma(X)\subset{\rm Stab}(X)$ of
stability conditions considered by Bridgeland. If $\Sigma(X)$ is
the only connected component, then the surjectivity would follow.

Note that, although $\ker(\rho^H)$ is by definition not visible on
$\widetilde H(X,\ZZ)$ and by Theorem \ref{thm:sameker} also not on
$\CH^*(X)$, it still seems to be governed by the Hodge structure
of $\widetilde H(X,\ZZ)$. Is this in any way reminiscent of the
Bloch conjecture (see \ref{subsect:Bloch})?

\subsection{}
On the other hand, the image of $\rho^H$ is well understood which
is (see \cite{HMS}):

\begin{thm}\label{thm:HMS}
The image of $\rho^H:\Aut(\Db(X))\to\OO(\widetilde H(X,\ZZ))$ is
the group $\OO_+(\widetilde H(X,\ZZ))$ of all Hodge isometries
leaving invariant the natural orientation of the space of positive
directions.
\end{thm}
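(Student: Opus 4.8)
The plan is to prove the two inclusions $\im(\rho^H)\subseteq\OO_+(\widetilde H(X,\ZZ))$ and $\OO_+(\widetilde H(X,\ZZ))\subseteq\im(\rho^H)$ separately. Before doing either, I would reduce the orientation bookkeeping to a two-dimensional question. The positive directions of $\widetilde H(X,\RR)$ form a four-dimensional subspace, and any $g\in\im(\rho^H)$, being a Hodge isometry, preserves the line $\widetilde H^{2,0}(X)=\CC\,\sigma$ and hence the positive plane $\langle\mathrm{Re}\,\sigma,\mathrm{Im}\,\sigma\rangle$; the induced map there is the real form of multiplication by a nonzero scalar $\lambda$, of determinant $\abs{\lambda}^2>0$. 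Thus preserving the orientation of the full positive four-space is equivalent to preserving the orientation of a positive two-plane inside $\widetilde\NS(X)_\RR$, i.e.\ to $g|_{\widetilde\NS(X)}$ lying in the index-two subgroup $\OO_+$ of orientation-preserving isometries of the space $\widetilde\NS(X)_\RR$ of signature $(2,\rho(X))$. All orientation statements below refer to this plane.

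\emph{Surjectivity.} First I would realise a generating set of $\OO_+(\widetilde H(X,\ZZ))$. The listed equivalences already produce many isometries: the double shift gives $-\id$; a spherical twist $T_E$ gives the reflection $s_{v(E)}$ in a $(-2)$-class, so all of the Weyl group $\widetilde W_X$ is realised; tensoring with $L\in\Pic(X)$ gives the parabolic isometry $\exp(c_1(L))$; an automorphism $f$ gives $f^H$; and a Fourier--Mukai transform to a two-dimensional fine moduli space of stable sheaves realises the isometries exchanging the isotropic summand $H^0\oplus H^4$ with a chosen isotropic Mukai vector. Each of these is readily checked to be orientation-preserving. To see that they generate all of $\OO_+(\widetilde H(X,\ZZ))$ I would invoke Orlov's derived Torelli theorem (\ref{subsect:GT}\,ii): it guarantees that \emph{some} equivalence induces a prescribed Hodge isometry on the transcendental lattice, and the remaining ambiguity — an isometry acting trivially on $T(X)$ — is absorbed by the Weyl group, automorphisms, and the parabolic and moduli-space isometries by a standard discriminant-form computation (Nikulin, Eichler). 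This step is essentially classical.

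\emph{Orientation preservation.} The substantial inclusion is $\im(\rho^H)\subseteq\OO_+$, i.e.\ that no autoequivalence reverses the orientation of the positive plane. The difficulty is that we do \emph{not} know that the equivalences above generate $\Aut(\Db(X))$ — this is tied to Conjecture \ref{conj:Bridg} — so one cannot reduce to generators. Instead I would argue by deformation. Given $\Phi$ with Fourier--Mukai kernel $\ke\in\Db(X\times X)$, I would spread $X$ out in a connected family (allowing non-projective and twisted K3 surfaces) and deform $\ke$ sideways to the kernel of an equivalence $\Phi_t$ on a nearby member $X_t$ chosen so that $\Phi_t$ is recognisably built from the standard orientation-preserving equivalences — for instance a very general (twisted) K3 surface, where $\Aut(\Db(X_t))$ is small and visibly orientation-preserving. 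Since the sign measuring the behaviour of $\Phi_t^H$ on the positive plane is a discrete invariant, it is locally constant along the connected family, so $\Phi=\Phi_0$ preserves orientation as well.

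The main obstacle is precisely this deformation step: one must develop the theory of Fourier--Mukai kernels in families over the moduli of (twisted) K3 surfaces, show that a given kernel deforms over a connected base, and control the resulting autoequivalence on a special fibre well enough to read off the orientation. This forces one to leave the projective category for the analytic and twisted settings and to use the surjectivity of the period map to guarantee connectedness of the relevant parameter space. The surjectivity direction, by contrast, is routine modulo the lattice-theoretic generation statement, whose only subtlety is that Orlov's theorem yields \emph{an} inducing equivalence rather than a prescribed one.
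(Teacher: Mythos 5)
Your proposal takes essentially the same approach as the paper, which likewise splits the statement into the inclusion $\OO_+(\widetilde H(X,\ZZ))\subseteq\im(\rho^H)$ — obtained from the explicitly known equivalences (spherical twists, shifts, line bundle twists, automorphisms, moduli-space kernels) together with lattice theory, following Mukai, Orlov, Hosono--Lian--Oguiso--Yau \cite{HLOY} and Ploog — and the much harder inclusion $\im(\rho^H)\subseteq\OO_+(\widetilde H(X,\ZZ))$, which is proved in \cite{HMS} by exactly the argument you outline: deforming the Fourier--Mukai kernel through families of (twisted, non-projective) K3 surfaces to a generic fibre where all autoequivalences are visibly orientation-preserving, and using that the orientation sign is a discrete, hence locally constant, invariant. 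The only slip is that the double shift $E\mapsto E[2]$ acts trivially on $\widetilde H(X,\ZZ)$ (it lies in $\ker(\rho^H)$); it is the single shift $E\mapsto E[1]$ that induces $-\id$, which is what your surjectivity step actually needs.
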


Recall that the Mukai pairing has signature $(4,20)$. The classes
${\rm Re}(\sigma),{\rm Im}(\sigma), 1-\omega^2/2,\omega$, where
$0\ne\sigma\in H^{2,0}(X)$ and $\omega\in\kk_X$ an ample class,
span a real subspace $V$ of dimension four which is positive
definite with respect to the Mukai pairing. Using orthogonal
projection, the orientations of $V$ and $\Phi^H(V)$ can be
compared.

To show that  ${\rm Im}(\rho^H)$ has at most index two in
$\OO(\widetilde H(X,\ZZ))$ uses techniques of Mukai and Orlov and
was observed by Hosono, Lian, Oguiso, Yau \cite{HLOY} and Ploog.
As it turned out, the difficult part is to prove that the index is
exactly two. This was predicted by Szendr\H{o}i, based on
considerations in mirror symmetry, and recently proved in a joint
work with Macr\`i and Stellari \cite{HMS}.

Let us now turn to the image of $\rho^\CH$. The only additional
structure the Chow group $\CH^*(X)$ seems to have is the subring
$R(X)\subset\CH^*(X)$ (see \ref{subsect:BV}).  And indeed, this
subring is preserved under derived equivalences (see \cite{HSp}):

\begin{thm}\label{thm:Rpres}
If $\rho(X)\geq2$ and $\Phi\in\Aut(\Db(X))$, then $\Phi^H$
preserves the subring $R(X)\subset\CH^*(X)$.
\end{thm}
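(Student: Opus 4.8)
The plan is to show that the induced automorphism $\Phi^\CH$ of $\CH^*(X)$ preserves $R(X)$. Since $\Phi^\CH$ is invertible and $\Phi^{-1}$ may be treated in exactly the same way, it suffices to prove the one inclusion $\Phi^\CH(R(X))\subseteq R(X)$. I would first rewrite the generators of $R(X)=\CH^0(X)\oplus\CH^1(X)\oplus\ZZ c_X$ in terms of Mukai vectors: the Beauville--Voisin relations recalled in \ref{subsect:BV} say precisely that $v^\CH(L)=\ch(L)\sqrt{\td(X)}\in R(X)$ for every $L\in\Pic(X)$, while $c_X=v^\CH(\ko_x)$ for a point $x$ lying on a rational curve, and these two families generate $R(X)$ as an abelian group.

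The line bundle generators are handled at once. Using the compatibility $\Phi^\CH(v^\CH(E))=v^\CH(\Phi(E))$, the image $\Phi^\CH(v^\CH(L))$ equals the Mukai vector of $\Phi(L)$. Now $L$ is spherical, and since an equivalence preserves all $\Ext$-groups, $\Phi(L)$ is again spherical; at this point I would invoke the derived generalisation of Beauville--Voisin, Theorem \ref{thm:sph}, which guarantees that any spherical object has Mukai vector in $R(X)$. Hence $\Phi^\CH(v^\CH(L))=v^\CH(\Phi(L))\in R(X)$ for every line bundle $L$.

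The remaining generator $c_X$ is the delicate one, because $\ko_x$ is not spherical and Theorem \ref{thm:sph} does not apply to it. Here I would use the quadratic behaviour of the Mukai vector to reach $c_X$ from line bundles up to a scalar. For $D\in\NS(X)$ let $L_D$ be the line bundle with $c_1(L_D)=D$ and set $q(D)=(D.D)/2$. A short computation with $v^\CH(L_D)=[X]+D+(q(D)+1)c_X$ gives the identity $v^\CH(L_D)+v^\CH(L_{D'})-v^\CH(L_{D+D'})-v^\CH(\ko_X)=-(D.D')\,c_X$, so that $(D.D')\,c_X$ lies in the subgroup $G\subseteq R(X)$ generated by the $v^\CH(L)$. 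Taking $D=D'$ an ample class gives $Nc_X\in G$ with $N=(D.D)>0$. By the previous paragraph $\Phi^\CH(G)\subseteq R(X)$, hence $N\,\Phi^\CH(c_X)=\Phi^\CH(Nc_X)\in R(X)$. Writing $\Phi^\CH(c_X)=r+c$ along the splitting $\CH^*(X)=R(X)\oplus\CH^*(X)_0$ forces $Nc=0$, and as $\CH^2(X)_0$ is torsion free by Roitman's theorem this yields $c=0$, i.e.\ $\Phi^\CH(c_X)\in R(X)$. Together with the line bundle case this proves $\Phi^\CH(R(X))\subseteq R(X)$.

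I expect the entire difficulty to be concentrated in the single input Theorem \ref{thm:sph}: everything else is formal manipulation of Mukai vectors and of the direct sum decomposition $\CH^*(X)=R(X)\oplus\CH^*(X)_0$, and in fact goes through for any $\rho(X)\geq 1$. It is in establishing that an arbitrary spherical object has Mukai vector in $R(X)$ — which rests on the rigidity of spherical objects and on connecting them, via twists and wall-crossing, to the line bundles for which the Beauville--Voisin computation is available — that the hypothesis $\rho(X)\geq 2$ is genuinely used, one reason being that it provides enough room in $\widetilde\NS(X)$ (of signature $(2,\rho(X))$) and enough spherical objects to carry out such a reduction.
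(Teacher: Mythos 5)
Your proof is correct, and its core reduction is the same as the paper's: deduce the statement from Theorem \ref{thm:sph}, using that an equivalence $\Phi$ takes each (spherical) line bundle $L$ to a spherical object and that $\Phi^\CH(v^\CH(L))=v^\CH(\Phi(L))$, so that all classes $v^\CH(L)$ land in $R(X)$. Where you genuinely differ is the final generation step, and there your version is better. The paper concludes by asserting that ``clearly'' $R(X)$ is generated as a group by the $v^\CH(L)$, $L\in\Pic(X)$; taken literally this is false in general, and your own computation shows why: the subgroup $G$ generated by the $v^\CH(L)$ meets $\ZZ c_X$ exactly in $Jc_X$, where $J\subset\ZZ$ is the subgroup generated by all intersection numbers $(D.D')$ with $D,D'\in\NS(X)$ (modulo $J$ the map $D\mapsto (D.D)/2$ becomes additive, so no integral combination of the $v^\CH(L)$ can equal $c_X$ unless $1\in J$). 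Hence for any K3 surface whose intersection form on $\NS(X)$ takes only even values -- e.g.\ $\NS(X)\cong\langle 2\rangle\oplus\langle -2\rangle$, which does occur with $\rho(X)=2$ -- the classes $v^\CH(L)$ generate a proper subgroup of $R(X)$, so the sketch given in the paper has a gap at exactly this point (the complete treatment is deferred to \cite{HSp}). Your repair -- $G$ surjects onto $R(X)/\ZZ c_X$ and contains $Nc_X$ with $N=(D.D)>0$ for $D$ ample, hence $N\Phi^\CH(c_X)\in R(X)$, and then the splitting $\CH^*(X)=R(X)\oplus\CH^*(X)_0$ together with Roitman's torsion-freeness of $\CH^2(X)$ forces $\Phi^\CH(c_X)\in R(X)$ -- uses only facts recorded in the paper and turns the sketch into a complete argument. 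In short, both routes rest on the single key input Theorem \ref{thm:sph}; yours adds the torsion/splitting step, which is not cosmetic but actually necessary for the argument as written in the paper to go through.
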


In other words, autoequivalences (and in fact equivalences)
respect the direct sum decomposition
$\CH^*(X)=R(X)\oplus\CH^*(X)_0$ (see \ref{subsect:BV}).

The assumption on the Picard rank should eventually be removed,
but as for questions concerning potential density of rational
points the Picard rank one case is indeed more complicated.

Clearly, the action of $\Phi^\CH$ on $R(X)$ can be completely
recovered from the action of $\Phi^H$ on $\widetilde\NS(X)$. On
the other hand, according to the Bloch conjecture (see
\ref{subsect:Bloch}) the action of $\Phi^\CH$ on $\CH^*(X)_0$
should be governed by the action of $\Phi^H$ on the transcendental
part $T(X)$. Note that for $K=\bar\QQ$ one expects $\CH^*(X)_0=0$,
so nothing interesting can be expected in this case. However, for
$K=\CC$ well-known arguments show that $\Phi^H\ne{\rm id}$ on
$T(X)$ implies $\Phi^\CH\ne{\rm id}$ on $\CH^*(X)_0$ (see
\cite{Voisin}). As usual, it is the converse that is much harder
to come by. Let us nevertheless rephrase the Bloch conjecture once
more for this case.

\begin{conj}
Suppose $\Phi^H={\rm id}$ on $T(X)$. Then $\Phi^\CH={\rm id}$ on
$\CH^*(X)_0$.
\end{conj}

By  Theorem \ref{thm:sameker} one has $\Phi^\CH={\rm id}$ under
the stronger assumption $\Phi^H={\rm id}$ not only on $T(X)$ but
on all of $\widetilde H(X,\ZZ)$. The special case of $\Phi=f_*$
will be discussed in more detail in Section \ref{sect:Aut}

Note that even if the conjecture can be proved we would still not
know how to describe the image of $\rho^\CH$. It seems, $\CH^*(X)$
has just not enough structure that could be used to determine
explicitly which automorphisms are induced by derived
equivalences.


\section{Chern classes of spherical objects}

It has become clear that spherical objects and the associated
spherical twists play a central role in the description of
$\Aut(\Db(X))$. Together with automorphisms of $X$ itself and
orthogonal transformations of $\widetilde H$ coming from universal
families of stable bundles, they determine the action of
$\Aut(\Db(X))$ on $\widetilde H(X,\ZZ)$. The description of the
kernel of $\rho^\CH$ should only involve squares of spherical
twists by Conjecture \ref{conj:Bridg}.

\subsection{}\label{subsect:Sph} It is time to give more examples of spherical
objects.

 i) Every line bundle $L\in\Pic(X)$ is a spherical object
in $\Db(X)$ with Mukai vector $v=(1,\ell,\ell^2/2+1)$ where
$\ell=c_1(L)$. Note that the spherical twist $T_L$ has nothing to
do with the equivalence given by the tensor product with $L$. Also
the relation between $T_L$ and e.g.\ $T_{L^2}$ is  not obvious.

ii) If $C\subset X$ is a smooth irreducible rational curve, then
all $\ko_C(i)$ are spherical objects with Mukai vector
$v=(0,[C],i+1)$. The spherical twist $T_{\ko_C(-1)}$ induces the
reflection $s_{[C]}$ on $\widetilde H(X,\ZZ)$, an element of the
Weyl group $W_X$.

 iii) Any simple vector bundle
$E$ which is also rigid, i.e.\ $\Ext^1(E,E)=0$, is spherical. This
generalizes i).  Note that rigid torsion free sheaves are
automatically locally free (see \cite{Muk}). Let
$v=(r,\ell,s)\in\widetilde NS(X)$ be a $(-2)$-class with $r>0$ and
$H$ be a fixed polarization. Then due to a result of Mukai there
exists a unique rigid bundle $E$ with $v(E)=v$ which is slope
stable with respect to $H$ (see \cite{HL}). However, varying $H$
usually leads to (finitely many) different spherical bundles
realizing $v$. They should be considered as non-separated points
in the moduli space of simple bundles (on deformations of $X$).
This can be made precise by saying that for two different
spherical bundles $E_1$ and $E_2$ with $v(E_1)=v(E_2)$ there
always exists a non-trivial homomorphism $E_1\to E_2$.

\subsection{} The Mukai vector $v(E)$ of a spherical object $E\in\Db(X)$ is
an integral $(1,1)$-class of square $-2$ and every such class can
be lifted to a spherical object. For the Mukai vectors in
$\CH^*(X)$ we have the following (see \cite{HSp}):

\begin{thm}\label{thm:sph}
If $\rho(X)\geq2$ and $E\in\Db(X)$ is spherical, then $v^\CH(E)\in
R(X)$.
\end{thm}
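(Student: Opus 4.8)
The plan is to show that for a spherical object $E$ on a K3 surface with $\rho(X)\geq 2$, its Chow-theoretic Mukai vector $v^\CH(E)$ lies in the Beauville--Voisin subring $R(X)=\CH^0(X)\oplus\CH^1(X)\oplus\ZZ c_X$. The key structural input is the direct sum decomposition $\CH^*(X)=R(X)\oplus\CH^*(X)_0$ from \ref{subsect:BV}, together with Theorem \ref{thm:Rpres}, which asserts that any autoequivalence $\Phi$ preserves $R(X)$ via $\Phi^\CH$. So $v^\CH(E)\in R(X)$ is equivalent to saying its component in $\CH^*(X)_0$ vanishes, and by compatibility $\Phi^\CH(v^\CH(E))=v^\CH(\Phi(E))$ this property is a derived invariant in the appropriate sense.

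First I would reduce to a single well-understood spherical object. The degree-zero part $v^\CH(E)_0\in\CH^2(X)_0$ transforms under $\Phi^\CH$ compatibly with $v(\Phi(E))$, and since $R(X)\cong\widetilde\NS(X)$ splits off by Theorem \ref{thm:Rpres}, the vanishing of $v^\CH(E)_0$ only depends on the $\Aut(\Db(X))$-orbit of $E$, or rather on the $\rho^H$-orbit of $\delta=v(E)$ in $\widetilde\Delta_X$. The strategy is therefore: (i) verify the claim directly for one convenient spherical object in each relevant orbit — the obvious choice is a line bundle $L$, for which $v^\CH(L)=\ch(L)\sqrt{\td(X)}\in R(X)$ holds by the Beauville--Voisin relations ii) and iii) of \ref{subsect:BV}, exactly as recorded there; (ii) invoke the fact that, using $\rho(X)\geq 2$, the image $\OO_+(\widetilde H(X,\ZZ))$ of $\rho^H$ (Theorem \ref{thm:HMS}) acts transitively enough on $\widetilde\Delta_X$ that every $(-2)$-class $\delta$ can be moved to $v(L)=(1,\ell,\ell^2/2+1)$ for a line bundle $L$ by some $\Phi^H$. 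Then $v^\CH(E)$ and $\Phi^\CH(v^\CH(E))=v^\CH(\Phi(E))$ lie in $R(X)$ simultaneously, and choosing $\Phi$ so that $\Phi(E)$ is (up to shift and the spherical rigidity of \ref{subsect:Sph}iii) a line bundle closes the argument.

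The main obstacle is the transition from equality of Mukai \emph{vectors} to control of the actual object and its \emph{Chow} class. Knowing $\Phi^H(v(E))=v(L)$ only tells us $v(\Phi(E))=v(L)$; it does not identify $\Phi(E)$ with $L$, and by the non-uniqueness discussed in \ref{subsect:ker} and \ref{subsect:Sph}iii there may be several spherical objects with the same Mukai vector, differing by a square of a spherical twist $T_F^2\in\ker(\rho^H)$. Thus the crux is to show that the $\CH^*(X)_0$-component of $v^\CH$ is unchanged under passing between distinct spherical objects sharing a Mukai vector, i.e.\ that $\ker(\rho^H)=\ker(\rho^\CH)$ (Theorem \ref{thm:sameker}) forces $v^\CH(T_F^2(E))_0=v^\CH(E)_0$. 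Here the hypothesis $\rho(X)\geq 2$ enters, exactly as in Theorem \ref{thm:Rpres} on which this rests. The delicate point will be tracking the degree-two Chow component through the cone defining $T_F$ in \ref{subsect:FM}iii); I expect to handle this by writing $v^\CH(T_E(F))$ via the Fourier--Mukai action $\Phi^\CH$ and using that the correction terms built from $\Ext^*(E,F)$ and the spherical class $v^\CH(E)\in R(X)$ (established for the base case) stay inside $R(X)$ because $R(X)$ is a subring and the relevant intersection products of classes in $R(X)$ land in $\ZZ c_X$. Once the orbit reduction and this invariance under $\ker(\rho^H)$ are in place, the theorem follows.
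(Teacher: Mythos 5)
Your proposal has two genuine gaps. The first is circularity: you take Theorem \ref{thm:Rpres} as a ``key structural input'', but the logical order in the paper is the reverse --- Theorem \ref{thm:sph} is proved first and Theorem \ref{thm:Rpres} is deduced from it. There is no independent route to \ref{thm:Rpres}: since $R(X)$ is generated as a group by the classes $v^\CH(L)$ with $L\in\Pic(X)$, showing that $\Phi^\CH$ preserves $R(X)$ amounts to showing $\Phi^\CH(v^\CH(L))=v^\CH(\Phi(L))\in R(X)$, and $\Phi(L)$ is precisely an arbitrary spherical object. So the derived invariance of the property ``$v^\CH(E)\in R(X)$'', on which your whole orbit argument rests, is exactly what is being proved and cannot be assumed.

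The second gap is that your reduction target is wrong, and with it the geometric core of the proof goes missing. It is not true that every class in $\widetilde\Delta_X$ can be moved by a Hodge isometry (equivalently, by some $\Phi^H$, using Theorem \ref{thm:HMS}) to the Mukai vector $v(L)=(1,\ell,\ell^2/2+1)$ of a line bundle: a Hodge isometry of $\widetilde H(X,\ZZ)$ preserves $T(X)$, hence preserves $\widetilde\NS(X)$ and the divisibility of classes inside $\widetilde\NS(X)$. Now $v(L)$ has divisibility $1$, whereas on a K3 surface with $\NS(X)\cong\langle4\rangle\oplus\ZZ[C]$, $[C]^2=-2$ and $[C]$ orthogonal to the polarization, the class $\delta=(0,[C],0)\in\widetilde\Delta_X$ (realized by a spherical object, e.g.\ $\ko_C(-1)$ when $C$ is a smooth rational curve) has divisibility $2$, so no such $\Phi^H$ exists. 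This is why the paper reduces instead to spherical \emph{bundles}: by Mukai's result (\ref{subsect:Sph} iii) every $(-2)$-class $(r,\ell,s)$ with $r>0$ is realized by a spherical bundle, and any spherical object can be brought into this situation by explicit equivalences. The bundle case is then the hard part, proved geometrically via Lazarsfeld's Brill--Noether technique \cite{Laz} and the Bogomolov--Mumford theorem on rational curves in ample linear systems \cite{MM}; this cannot be replaced by the Beauville--Voisin statement for line bundles alone, which is your only base case. The one ingredient you share with the paper is the final step: two spherical objects with equal Mukai vector have twists agreeing on $\widetilde H(X,\ZZ)$, hence by Theorem \ref{thm:sameker} agreeing on $\CH^*(X)$, and comparing the Chow-theoretic Mukai vectors of the Fourier--Mukai kernels of the twists recovers $v^\CH(E)$ from the bundle case. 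But without that bundle case, and with the circular appeal to \ref{thm:Rpres}, the argument does not close.
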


In particular, two non-isomorphic spherical bundles realizing the
same Mukai vector in $\widetilde H(X,\ZZ)$ are also not
distinguished by their Mukai vectors in $\CH^*(X)$. Again, the
result should hold without the assumption on the Picard group.

This theorem is first proved for spherical bundles by using
Lazarsfeld's technique to show that primitive ample curves on K3
surfaces are Brill--Noether general \cite{Laz} and the
Bogomolov--Mumford theorem on the existence of rational curves in
ample linear systems \cite{MM} (which is also at the core of
\cite{BV}). Then one uses Theorem \ref{thm:sameker} to generalize
this to spherical objects realizing the Mukai vector of a
spherical bundle. For this step one observes that knowing the
Mukai vector of the Fourier--Mukai kernel of $T_E$ in
$\CH^*(X\times X)$ allows one to determine $v^\CH(E)$.

Actually Theorem \ref{thm:sph} is proved first and  Theorem
\ref{thm:Rpres} is a consequence of it, Indeed, if
$\Phi:\Db(X)\congpf\Db(X)$ is an equivalence, then for a spherical
object  $E\in\Db(X)$ the image $\Phi(E)$ is again spherical. Since
$v^\CH(\Phi(E))=\Phi^\CH(v^\CH(E))$, Theorem \ref{thm:sph} shows
that $\Phi^\CH$ sends Mukai vectors of spherical objects, in
particular of line bundles, to classes in $R(X)$. Clearly, $R(X)$
is generated as a group by the $v^\CH(L)$ with $L\in\Pic(X)$ which
then proves Theorem \ref{thm:Rpres}.

\subsection{}\label{subsect:Inaba} The true reason behind Theorem \ref{thm:sph} and in
fact behind most of the results in \cite{BV} is the general
philosophy that every rigid geometric object on a variety $X$ is
already defined over the smallest algebraically closed field of
definition of $X$. This is then combined with the Bloch--Beilinson
conjecture which for $X$ defined over $\bar\QQ$ predicts that
$R(X_\CC)=\CH^*(X)$.

To make this more precise consider a K3 surface $X$ over $\bar\QQ$
and the associated complex K3 surface $X_\CC$. An object
$E\in\Db(X_\CC)$ is defined over $\bar\QQ$ if there exists an
object $F\in\Db(X)$ such that its base-change to $X_\CC$ is
isomorphic to $E$. We write this as $E\cong F_\CC$.

The pull-back yields an injection of rings
$\CH^*(X)\,\hookrightarrow \CH^*(X_\CC)$ and if $E\in\Db(X_\CC)$
is defined over $\bar\QQ$ its Mukai vector $v^\CH(E)$ is contained
in the image of this map. Now, if we can show that
$\CH^*(X)=R(X_\CC)$, then the Mukai vector of every
$E\in\Db(X_\CC)$ defined over $\bar\QQ$ is contained in
$R(X_\CC)$.

Eventually one observes that spherical objects on $X_\CC$ are
defined over $\bar\QQ$. For line bundles $L\in\Pic(X_\CC)$ this is
well-known, i.e.\ $\Pic(X)\cong\Pic(X_\CC)$. Indeed, the Picard
functor is defined over $\bar\QQ$ (or in fact over the field of
definition of $X$) and therefore the set of connected components
of the Picard scheme does not change under base change. The Picard
scheme of a K3 surface is zero-dimensional, a connected component
consists of one closed point and, therefore, base change
identifies the set of closed points. For the algebraically closed
field $\bar\QQ$ the set of closed points of the Picard scheme of
$X$ is the Picard group of $X$ which thus does not get bigger
under base change e.g.\ to $\CC$.

For general spherical objects in $\Db(X_\CC)$ the proof uses
results of Inaba and Lieblich (see e.g.\ \cite{Inaba1}) on the
representability of the functor of complexes (with vanishing
negative $\Ext$'s) by an algebraic space. This is technically more
involved, but the underlying idea is just the same as for the case
of line bundles.


\section{Automorphisms acting on the Chow ring}\label{sect:Aut}

We come back to the question raised as Conjecture
\ref{conj:BlochAut}. So suppose $f\in\Aut(X)$ is an automorphism
of a complex projective K3 surface $X$ with $f^*\sigma=\sigma$
where $\sigma$ is a trivializing section of the canonical bundle
$\omega_X$. In other words, the Hodge isometry $f^*$ of
$H^2(X,\ZZ)$ (or of $\widetilde H(X,\ZZ)$) is the identity on
$H^{0,2}(X)=\widetilde H^{0,2}(X)$ or, equivalently, on the
transcendental lattice $T(X)$. What can we say about the action
induced by $f$ on $\CH^2(X)$? Obviously, the question makes sense
for K3 surfaces defined over other fields, e.g.\ for $\bar\QQ$,
but $\CC$ is the most interesting case (at least in characteristic
zero) and for $\bar\QQ$ the answer should be without any interest
due to the Bloch--Beilinson conjecture.

In this section we will explain that the techniques of the earlier
sections and of \cite{HSp} can be combined with results of Kneser
on the orthogonal group of lattices to  prove Conjecture
\ref{conj:BlochAut} under some additional assumptions on the
Picard group of $X$.

\subsection{}\label{subsect:vGGS} Suppose $f\in\Aut(X)$ is a
non-trivial  symplectomorphism, i.e.\ $f^*\sigma=\sigma$. If $f$
has finite order $n$, then $n=2,\ldots,7$, or $8$. This is a
result due to Nikulin \cite{Nik} and follows from the holomorphic
fixed point formula (see \cite{Mukai2}). Moreover, in this case
$f$ has only finitely many fixed points, all isolated, and
depending on $n$ the number of fixed point is $8,6,4,4,2,3$, resp.
$2$. The minimal resolution of the quotient $Y\to \bar
X:=X/\langle f\rangle$ yields again a K3 surface $Y$. Thus, for
symplecto\-morphisms of finite order Conjecture
\ref{conj:BlochAut} is equivalent to the bijectivity of the
natural map $\CH^2(Y)_\QQ\to\CH^2(X)_\QQ$. Due to a result of
Nikulin the action of a symplectomorphism $f$ of finite order on
$H^2(X,\ZZ)$ is as an abstract lattice automorphism independent of
$f$ and depends only on the order. For prime order $2,3,5$, and
$7$ it was explicitly described  and studied in \cite{vGS,GS}.
E.g.\ for a symplectic involution the fixed part in $H^2(X,\ZZ)$
has rank $14$. The moduli space of K3 surfaces $X$ endowed with
with a symplectic involution is of dimension $11$ and the Picard
group of the generic member contains $E_8(-2)$ as a primitive
sublattice of corank one.

Explicit examples of symplectomorphisms are easy to construct.
E.g.\  $(x_0:x_1:x_2:x_3)\mapsto (-x_0:-x_1:x_2:x_3)$ defines a
symplectic involution on the Fermat quartic $X_0\subset\PP^3$. On
an elliptic K3 surface with two sections one can use fibrewise
addition to produce symplectomorphisms.

\subsection{} The orthogonal group of a unimodular lattice
$\Lambda$ has been investigated in detail by Wall in \cite{Wall}.
Subsequently, there have been many attempts to generalize some of
his results to non-unimodular lattices. Of course, often new
techniques are required in the more general setting and some of
the results do not hold any longer.

The article of Kneser \cite{Kneser} turned out to be particularly
relevant for our purpose. Before we can state Kneser's result we
need to recall a few notions. First, the Witt index of a lattice
$\Lambda$ is the maximal dimension of an isotropic subspace in
$\Lambda_\RR$. So, if $\Lambda$ is non-degenerate of  signature
$(p,q)$, then the Witt index is $\min\{p,q\}$. The $p$-rank
$\rk_p(\Lambda)$ of $\Lambda$ is the maximal rank of a sublattice
$\Lambda'\subset\Lambda$ whose discriminant is not divisible by
$p$.

Recall that every orthogonal transformation of the real vector
space $\Lambda_\RR$ can be written as a composition of
reflections. The spinor norm of a reflection with respect to a
vector $v\in\Lambda_\RR$ is defined as $-(v,v)/2$ in
$\RR^*/{\RR^*}^2$. In particular, a reflection $s_\delta$ for a
$(-2)$-class $\delta\in\Lambda$ has trivial spinor norm. The
spinor norm for reflections is extended multiplicatively to a
homomorphism $\OO(\Lambda)\to\{\pm1\}$.

The following is a classical result due to Kneser, motivated by
work of Ebeling, which does not seem  widely known.

\begin{thm} Let $\Lambda$ be an even non-degenerate  lattice of Witt index at
least two such that $\Lambda$ represents $-2$. Suppose
$\rk_2(\Lambda)\geq 6$ and $\rk_3(\Lambda)\geq 5$. Then every
$g\in{\rm SO}(\Lambda)$ with $g={\rm id}$ on $\Lambda^*/\Lambda$
and trivial spinor norm can be written as a composition of an even
number of reflections $\prod s_{\delta_i}$ with $(-2)$-classes
$\delta_i\in\Lambda$.
\end{thm}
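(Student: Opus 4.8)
The plan is to reduce the statement to showing that the group generated by $(-2)$-reflections $s_\delta$ with $\delta\in\Lambda$ exhausts the subgroup $G\subset{\rm SO}(\Lambda)$ consisting of those $g$ acting trivially on the discriminant $\Lambda^*/\Lambda$ and having trivial spinor norm. Denote by $W_0\subset G$ the subgroup generated by the $s_\delta$ with $\delta^2=-2$. Since each such $s_\delta$ has determinant $-1$, trivial spinor norm, and acts trivially on $\Lambda^*/\Lambda$ (because reflection in a $(-2)$-vector sends $x$ to $x+(x,\delta)\delta$, and $(x,\delta)\in\ZZ$ for $x\in\Lambda^*$ precisely when $\delta$ is $(-2)$, so the induced map on $\Lambda^*/\Lambda$ is the identity), we have $W_0\subseteq G$, and the content is the reverse inclusion.

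First I would establish the $p$-adic and real local pieces. Because $\Lambda$ has Witt index at least two, the real orthogonal group has enough isotropic room that spinor norm together with the spinor-norm-trivial, determinant-one conditions pins down the correct connected behavior; this is where the Witt index hypothesis is used. The conditions $\rk_2(\Lambda)\ge 6$ and $\rk_3(\Lambda)\ge 5$ are exactly what is needed so that at the primes $2$ and $3$ the lattice contains a large enough unimodular (prime-to-$p$) sublattice for Wall's unimodular theory in \cite{Wall} to apply locally; at all other primes unimodularity of the relevant localization is automatic and the local orthogonal groups are generated by reflections by standard Eichler--Siegel--type arguments. The role of $\Lambda$ representing $-2$ is to guarantee that the reflections available are genuinely $(-2)$-reflections (trivial spinor norm) and not merely reflections in vectors of some other norm.

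The main step, and the one I expect to be the hardest, is the passage from local generation to global generation: showing that an element $g\in G$ which is everywhere locally a product of $(-2)$-reflections is globally such a product. The obstruction to this is measured by a spinor genus / class group computation, and Kneser's insight (following Ebeling) is that the two rank hypotheses at $p=2,3$ together with the spinor-norm triviality make the relevant obstruction group vanish, so that strong approximation for the spin group forces the local factorizations to glue to a global one. Concretely, I would use strong approximation in ${\rm Spin}(\Lambda_\QQ)$ to approximate $g$ by a global product of reflections agreeing with $g$ at the finitely many bad primes, and then argue that the discrepancy lies in $W_0$ by the local analysis. Finally, the \emph{even} number of reflections is automatic: $g\in{\rm SO}(\Lambda)$ has determinant $+1$, and each $s_\delta$ contributes a factor $-1$, so any expression of $g$ as $\prod s_{\delta_i}$ must involve an even number of factors.

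One technical point worth isolating is the verification that the conditions defining $G$—triviality on $\Lambda^*/\Lambda$ and trivial spinor norm—are precisely the two invariants that $W_0$ manifestly satisfies and that are known to be a \emph{complete} set of invariants under the stated rank bounds. I would therefore organize the argument as: (i) record that $W_0\subseteq G$; (ii) compute the local orthogonal groups and their generation by reflections using \cite{Wall} at $2,3$ and elementary arguments elsewhere; (iii) invoke strong approximation to globalize; and (iv) deduce $G\subseteq W_0$, with evenness coming from the determinant.
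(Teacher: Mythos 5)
The first thing to say is that the paper does not prove this statement at all: it is quoted as a classical theorem of Kneser, with the citation \cite{Kneser}, and the text around it only rephrases the conclusion as the identity $W_\Lambda=\ker(\OO(\Lambda)\to\{\pm1\}\times\OO(\Lambda^*/\Lambda))$. So there is no in-paper proof to compare yours against, and your attempt has to stand on its own. To your credit, the architecture you describe --- generation by reflections locally at each prime, then strong approximation for the spin group to pass from local to global --- is genuinely the Eichler--Kneser method by which this theorem is proved in the literature, and your final remark that the evenness of the number of reflections is forced by $\det(g)=+1$ is correct.

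As a proof, however, the proposal has real gaps, and they sit exactly where the hypotheses do their work. (a) The local step is only asserted: you say $\rk_2(\Lambda)\geq 6$ and $\rk_3(\Lambda)\geq 5$ are ``exactly what is needed'' for Wall's theory \cite{Wall} to apply at $p=2,3$, but you never formulate the local generation statement, let alone prove it. The content of these hypotheses is to exclude a short list of exceptional reductions mod $2$ and $3$ for which generation by $(-2)$-reflections fails (the paper records the excluded types at $p=2$: $\bar x_1\bar x_2$, $\bar x_1\bar x_2+\bar x_3^2$, $\bar x_1\bar x_2+\bar x_3\bar x_4+\bar x_5^2$); nothing in your text engages with this. (b) The globalization step is circular: writing that ``Kneser's insight \ldots\ makes the relevant obstruction group vanish, so that strong approximation \ldots\ forces the local factorizations to glue'' appeals to the very theorem you are proving. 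A real proof must identify the obstruction (a spinor-genus computation for the genus of $\Lambda$) and verify its vanishing under the stated hypotheses, including the role of Witt index $\geq 2$, whose function you describe only vaguely. (c) Two smaller inaccuracies: the inclusion $W_0\subseteq G$ is false as stated, since $G\subset{\rm SO}(\Lambda)$ while $W_0$ contains the reflections themselves, of determinant $-1$; what your own computation gives is $W_0\cap{\rm SO}(\Lambda)\subseteq G$. And $(x,\delta)\in\ZZ$ for $x\in\Lambda^*$ holds for every $\delta\in\Lambda$, not ``precisely when $\delta$ is $(-2)$''; the relevance of $\delta^2=-2$ is that the reflection coefficient $-2(x,\delta)/(\delta,\delta)=(x,\delta)$ is then integral. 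In short: right road map, but the two pillars carrying it --- local generation at the bad primes and the spinor-genus computation --- are missing, and the second is replaced by an appeal to the theorem itself.
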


By using that a $(-2)$-reflection has determinant $-1$ and trivial
spinor norm and discriminant, Kneser's result can be rephrased as
follows: Under the above conditions on $\Lambda$ the Weyl group
$W_\Lambda$ of $\Lambda$ is given by
\begin{equation}\label{eqn:W}W_\Lambda=\ker\left(\OO(\Lambda)\to
\{\pm1\}\times\OO(\Lambda^*/\Lambda)\right).\end{equation}

The assumption on $\rk_2$ and $\rk_3$ can be replaced by assuming
that the reduction mod $2$ resp.\ $3$ are not of a very particular
type. E.g.\ for $p=2$ one has to exclude the case $\bar x_1\bar
x_2$, $\bar x_1\bar x_2+\bar x_3^2$, and $\bar x_1\bar x_2+\bar
x_3\bar x_4+\bar x_5^2$. See \cite{Kneser} or details.

\subsection{} Kneser's result can never be applied to the N\'eron--Severi
lattice $\NS(X)$ of a K3 surface $X$, because its Witt index is
one. But the extended N\'eron--Severi lattice $\widetilde
\NS(X)\cong\NS(X)\oplus U$ has Witt index two. The conditions on
$\rk_2$ and $\rk_3$ for $\widetilde\NS(X)$ become
$\rk_2(\NS(X))\geq 4$ and $\rk_3(\NS(X))\geq 3$. This leads to the
main result of this section.

\begin{thm}\label{thm:Kneserappl}
Suppose $\rk_2(\NS(X))\geq 4$ and $\rk_3(\NS(X))\geq3$. Then any
symplectomorphism $f\in\Aut(X)$ acts trivially on $\CH^2(X)$.
\end{thm}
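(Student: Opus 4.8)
The plan is to combine Kneser's theorem with the results of Section~4 and Section~5, reducing the statement about $\CH^2(X)$ to a purely lattice-theoretic statement about $\widetilde W_X$ and then to the already-established fact that squares of spherical twists act trivially on $\CH^*(X)$. First I would package the hypothesis $f^*\sigma=\sigma$ into the condition that $f^*$ is the identity on the transcendental lattice $T(X)$, equivalently on $\widetilde H^{2,0}(X)$. Passing to the induced Fourier--Mukai equivalence $f_*\in\Aut(\Db(X))$, the associated Hodge isometry $\Phi^H=(f_*)^H$ is the identity on $T(X)$ and acts on $\widetilde\NS(X)=\NS(X)\oplus U$ as an element $g\in\OO(\widetilde\NS(X))$. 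Because $f$ is an automorphism preserving the positive cone and the ample cone, $g$ lies in $\SO(\widetilde\NS(X))$, has trivial spinor norm (the ample class has positive square and is preserved), and induces the identity on the discriminant $\widetilde\NS(X)^*/\widetilde\NS(X)$, since a gluing of the unimodular extension $\widetilde H(X,\ZZ)$ that is the identity on $T(X)$ must act trivially on the discriminant form of $\widetilde\NS(X)$.

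Next I would invoke Kneser's theorem. The extended N\'eron--Severi lattice $\widetilde\NS(X)\cong\NS(X)\oplus U$ has Witt index two, represents $-2$ (e.g.\ via the hyperbolic summand), and under the hypotheses $\rk_2(\NS(X))\geq 4$ and $\rk_3(\NS(X))\geq 3$ the corresponding bounds $\rk_2(\widetilde\NS(X))\geq 6$ and $\rk_3(\widetilde\NS(X))\geq 5$ hold. Hence by the reformulation in \eqref{eqn:W}, the element $g$ lies in the Weyl group $\widetilde W_X=\ker\bigl(\OO(\widetilde\NS(X))\to\{\pm1\}\times\OO(\widetilde\NS(X)^*/\widetilde\NS(X))\bigr)$, so $g=\prod_i s_{\delta_i}$ is a product of an \emph{even} number of reflections in $(-2)$-classes $\delta_i\in\widetilde\NS(X)$.

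The decisive step is to realize this lattice-level factorization geometrically. Each $(-2)$-class $\delta_i$ is the Mukai vector $v(E_i)$ of a spherical object $E_i\in\Db(X)$ (see \ref{subsect:cone} and \ref{subsect:Sph}), and $T_{E_i}^H=s_{\delta_i}$. Consider the autoequivalence $\Psi:=\bigl(\prod_i T_{E_i}\bigr)^{-1}\comp f_*\in\Aut(\Db(X))$. On $\widetilde H(X,\ZZ)$ one has $\Psi^H=(\prod s_{\delta_i})^{-1}\comp g=\id$ on $\widetilde\NS(X)$, while both factors are already the identity on $T(X)$; hence $\Psi^H=\id$ on all of $\widetilde H(X,\ZZ)$, so $\Psi\in\ker(\rho^H)$. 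By Theorem~\ref{thm:sameker} we get $\Psi\in\ker(\rho^\CH)$, i.e.\ $\Psi^\CH=\id$ on $\CH^*(X)$, so $(f_*)^\CH=\bigl(\prod_i T_{E_i}\bigr)^\CH$ on the Chow ring. Because the number of factors is even and reflections pair up, I would reorganize $\prod_i T_{E_i}$ so that its action on $\CH^*(X)$ is controlled by \emph{squares} of spherical twists; the action of such squares on $\CH^*(X)_0$ is trivial by Theorem~\ref{thm:sph} (which guarantees $v^\CH(E_i)\in R(X)$, forcing the kernel of the Fourier--Mukai cone to have Chow-theoretic Mukai vector in $R$), and on the $R(X)$-summand the action agrees with the action of $\rho^H$ via the isomorphism $R(X)\cong\widetilde\NS(X)$ of \ref{subsect:BV}, which is the identity because $g\in\widetilde W_X$ composes with the reflections to the identity. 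Restricting to $\CH^2(X)$ then yields $f^*=\id$, as claimed.

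The main obstacle I anticipate is the passage from the \emph{even} factorization $g=\prod s_{\delta_i}$ to a genuine statement about \emph{squares} $T_E^2$: the individual twists $T_{E_i}$ need not act trivially on $\CH^*(X)$, and it is only their even combinations—modeled on the generators of $\ker(\rho^H)$ described in \ref{subsect:ker}—that do. Making this rigorous requires checking that the discrepancy between $\prod_i T_{E_i}$ and a product of squares again lies in $\ker(\rho^\CH)$, which forces a careful bookkeeping of how the reflections recombine (using that any two spherical lifts of the same $\delta$ differ by an element of $\ker(\rho^H)$, again by Theorem~\ref{thm:sameker}). Everything else is either Kneser's theorem applied verbatim or a direct appeal to Theorems~\ref{thm:sameker} and~\ref{thm:sph}.
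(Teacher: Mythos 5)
Your overall architecture is the same as the paper's: from $f^*=\id$ on $T(X)$ you deduce that the discriminant action on $\widetilde\NS(X)$ is trivial, you apply Kneser's theorem in the form \eqref{eqn:W} to $\widetilde\NS(X)\cong\NS(X)\oplus U$ (the numerical hypotheses $\rk_2\geq 6$, $\rk_3\geq 5$, Witt index two, representing $-2$ are checked correctly), you write the resulting element of $\widetilde W_X$ as $\prod_i s_{\delta_i}$ with $(-2)$-classes $\delta_i$, lift the $\delta_i$ to spherical objects $E_i$, and use Theorem~\ref{thm:sameker} to conclude $(f_*)^{\CH}=\bigl(\prod_i T_{E_i}\bigr)^{\CH}$. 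Two preparatory claims are, however, not justified as written. A symplectomorphism need not fix any ample class (only the ample \emph{cone}; for infinite order there is no invariant ample class at all), so your parenthetical argument for the trivial spinor norm does not stand; and cone preservation gives no control on the determinant, so membership in ${\rm SO}(\widetilde\NS(X))$ and the asserted \emph{evenness} of the number of reflections are unproven. The paper is careful on exactly this point: it applies \eqref{eqn:W}, which involves only spinor norm and discriminant and no determinant, to $g_f=f_*$ or $g_f=s_{\delta_0}\comp f_*$ with $\delta_0=(1,0,-1)$ of square $+2$, and only a posteriori excludes the second case by the orientation argument.

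Those issues could be repaired, but your concluding step contains a genuine error. You propose to reorganize $\prod_i T_{E_i}$ into \emph{squares} of spherical twists in order to get triviality on $\CH^*(X)_0$. No such reorganization can exist: every product of squares $T_F^2$ lies in $\ker(\rho^H)$, whereas $\rho^H\bigl(\prod_i T_{E_i}\bigr)=\prod_i s_{\delta_i}$, which equals $f_*$ on $\widetilde\NS(X)$ and is nontrivial whenever $f\neq\id$ (by Global Torelli, a nontrivial $f$ acting as $\id$ on $T(X)$ must act nontrivially on $\NS(X)$). Hence $\prod_i T_{E_i}$ is never a product of squares, not even modulo $\ker(\rho^H)=\ker(\rho^{\CH})$, and evenness of the number of factors is of no help; the fact that two spherical lifts of the same $\delta$ differ by an element of $\ker(\rho^H)$ does not address this. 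For the same reason your claim that the action on the summand $R(X)$ ``is the identity'' is false: under $R(X)\cong\widetilde\NS(X)$ it is $f_*|_{\widetilde\NS(X)}$, nontrivial on $\Pic(X)\subset R(X)$; what is true, and all that is needed, is that it fixes $R(X)\cap\CH^2(X)=\ZZ c_X$, since $f_*$ fixes $(0,0,1)$. The paper's conclusion requires no squares at all: by Theorem~\ref{thm:sph} one has $v^{\CH}(E_i)\in R(X)$, and since the Fourier--Mukai kernel of $T_{E_i}$ is the cone of $E_i^*\boxtimes E_i\to\ko_\Delta$, for every class $\alpha$ the difference $T_{E_i}^{\CH}(\alpha)-\alpha$ is an integer multiple of $v^{\CH}(E_i)$, the integer being $\rk(E_i)\deg(\alpha)$ when $\alpha\in\CH^2(X)$. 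So each \emph{single} twist already fixes $\CH^2(X)_0$ pointwise and moves any class only inside its coset modulo $R(X)$; consequently $(f_*)^{\CH}(\alpha)-\alpha\in R(X)\cap\CH^2(X)=\ZZ c_X$ for $\alpha\in\CH^2(X)$, and since $f_*$ preserves degrees this difference vanishes. This computation is the actual content of the paper's final appeal to Theorem~\ref{thm:sph}; your plan buries it inside an impossible regrouping.
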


\begin{proof}
First note that the discriminant of an orthogonal transformation
of a unimodular lattice is always trivial and that the
discriminant groups of $\NS(X)$ and $T(X)$ are naturally
identified. Since a symplectomorphism acts as ${\rm id}$ on
$T(X)$, its discriminant on $\NS(X)$ is also trivial. Note that a
$(-2)$-reflection $s_\delta$ has also trivial discriminant and
spinor norm $1$. Its determinant is $-1$.

Let now $\delta_0:=(1,0,-1)$, which is a class of square
$\delta_0^2=2$ (and not $-2$). So the induced reflection
$s_\delta$ has spinor norm and determinant both equal to $-1$. Its
discriminant is trivial. To a symplectomorphism $f$ we associate
the orthogonal transformation $g_f$ as follows. It is $f_*$ if the
spinor norm of $f_*$ is $1$ and $s_{\delta_0}\comp f_*$ otherwise.
Then $g_f$ has trivial spinor norm  and trivial discriminant, By
Equation (\ref{eqn:W}) this shows $g_f\in \widetilde W_X$, i.e.\
$f_*$ resp.\ $s_{\delta_0}\comp f_*$ is of the form $\prod
s_{\delta_i}$ with $(-2)$-classes $\delta_i$. Writing
$\delta_i=v(E_i)$ with spherical $E_i$ allows one to interpret the
right hand side as $\prod T^H_{E_i}$.


 Clearly, the $T^H_{E_i}$ preserve the orientation of the four
positive directions and so does $f_*$. But $s_{\delta_0}$ does
not, which proves a posteriori  that the spinor norm of $f_*$ must
always be trivial, i.e.\ $g_f=f_*$.

Thus, $f_*=\prod T^H_{E_i}$ and hence we proved that under the
assumptions on $\NS(X)$ the action of the symplectomorphism $f$ on
$\widetilde H(X,\ZZ)$ coincides with the action of the
autoequivalence $\Phi:=\prod T_{E_i}$. But by Theorem
\ref{thm:sameker}  their actions then coincide also on $\CH^*(X)$.
 To conclude, use Theorem \ref{thm:sph} which shows that the action of $\Phi$  on
$\CH^2(X)_0$ is trivial.
\end{proof}

\begin{remark}
The proof actually shows that the image of the subgroup of those
$\Phi\in\Aut(\Db(X))$ acting trivially on $T(X)$ (the `symplectic
equivalences') in $\OO(\widetilde\NS(X))$ is $\widetilde W_X$,
i.e.\ coincides with the image of the subgroup spanned by all
spherical twists $T_E$.

\end{remark}

Unfortunately, Theorem \ref{thm:Kneserappl} does not cover the
generic case of symplectomorphisms of finite order. E.g.\ the
N\'eron--Severi group of a generic K3 surface endowed  with a
symplectic involution is up to index two isomorphic to $\ZZ
\ell\oplus E_8(-2)$ (see  \cite{vGS}). Whatever the square of
$\ell$ is, the extended N\'eron--Severi lattice $\widetilde\NS(X)$
will have $\rk_2=2$ and indeed its reduction mod $2$ is of the
type $\bar x_1 \bar x_2$ explicitly excluded in Kneser's result
and its refinement alluded to above.

\begin{ex}
By a result of Morrison \cite{Mor} one knows that for Picard rank
$19$ or $20$ the N\'eron--Severi group $\NS(X)$ contains
$E_8(-1)^{\oplus 2}$ and hence the assumptions of Theorem
\ref{thm:Kneserappl} are satisfied (by far). In particular, our
result applies to the members $X_t$ of the Dwork family $\sum
x_i^4+t\prod x_i$ in $\PP^3$, so in particular to the Fermat
quartic itself. We can conclude that all symplectic automorphisms
of $X_t$ act trivially on $\CH^2(X_t)$. For the symplectic
automorphisms given by multiplication with roots of unities this
was proved by different methods already in \cite{Chatz}. To coe
back to the explicit example mentioned before: The involution of
the Fermat quartic $X_0$ given by $(x_0:x_1:x_2:x_3)\mapsto
(-x_0:-x_1:x_2:x_3)$ acts trivially on $\CH^2(X)$.
\end{ex}

 Although K3 surfaces $X$ with a symplectomorphisms $f$ and a
N\'eron--Severi group satisfying the assumptions of Theorem
\ref{thm:Kneserappl} are dense in the moduli space of  all $(X,f)$
without any condition on the N\'eron--Severi group, this is not
enough to prove Bloch's conjecture for all $(X,f)$.



\begin{thebibliography}{99}

\bibitem{BHPV} W.\ Barth, K.\ Hulek, C. Peters, A. Van de Ven
\emph{Compact complex surfaces},
 Ergebnisse 3. Springer (2004).

\bibitem{Bl} S.\ Bloch \emph{Lectures on algebraic cycles},
Duke Univ.\ Math.\ Series, IV (1980).


\bibitem{Ast} A.\ Beauville, J.-P.\
Bourguignon, M.\ Demazure ed. \emph{Geom{\'e}trie des surfaces K3:
modules et p{\'e}riodes}, S{\'e}minaires Palaiseau.
Ast{\'e}risque 126 (1985).

\bibitem{BV} A.\ Beauville, C.\ Voisin \emph{On the
Chow ring of a K3 surface}, J.\ Algebraic Geom.\ 13 (2004),
417--426.

\bibitem{BeauHK} A.\ Beauville \emph{On the splitting of the Bloch--Beilinson
filtration},  London Math.\ Soc.\ Lecture Notes 344, Cambridge
University Press (2007), 38--53.



\bibitem{Br} T.\ Bridgeland \emph{Stability conditions on K3 surfaces}, Duke Math.\ J.\
141 (2008),  241--291.

\bibitem{Chatz} A.\ Chatzistamatiou \emph{First coniveau notch of the Dwork
family and its mirror}, Math.\ Res.\ Lett.\ 16 (2009),  563--575.

\bibitem{Fer} A.\ Ferretti \emph{The Chow ring of double EPW
sextics}, arXiv:0907.5381v1.

\bibitem{Fulton} W.\ Fulton \emph{Intersection theory},
 Second edition. Ergebnisse 2. Springer (1998).

\bibitem{GS} A.\ Garbagnati, A.\ Sarti \emph{Symplectic
auomorphisms of prime order} J.\ Algebra 318 (2007), 323--350.

\bibitem{vGS} B.\ van Geemen, A.\ Sarti \emph{Nikulin involutions
on K3 surfaces}, Math.\ Z.\ 255 (2007), 731--753.

\bibitem{GG} M.\ Green, P.\ Griffiths  \emph{Hodge-theoretic
invariants for algebraic cycles}, IMRN 9 (2003), 477--510.

\bibitem{GGP} M.\ Green, P.\ Griffiths, K.\ Paranjape
\emph{Cycles over fields of transcendence degree $1$}, Michigan
Math.\ J.\ 52 (2004), 181--187.



\bibitem{HLOY} S.\ Hosono, B.H.\ Lian, K.\ Oguiso, S.-.T.\ Yau
\emph{Autoequivalences of derived category of a K3 surface and
monodromy transformations}, J.\ Alg.\ Geom.\ 13 (2004), 513--545.

\bibitem{FM} D.\ Huybrechts
\emph{Fourier--Mukai transforms in algebraic geometry}, Oxford
Mathematical Monographs (2006).

\bibitem{HL} D.\ Huybrechts, M.\ Lehn
\emph{The geometry of moduli spaces of sheaves}, 2nd edition.
Cambridge University Press to appear (2010).

\bibitem{HMS} D.\ Huybrechts, E.\ Macr\`i, P.\ Stellari
\emph{Derived equivalences of K3 surfaces and orientation}, Duke
Math.\ J.\ 149 (2009), 461--507.

\bibitem{HSp} D.\ Huybrechts \emph{Chow groups of K3 surfaces and spherical
objects}, to appear in J.\ EMS



\bibitem{Inaba1} M.\ Inaba
\emph{Toward a definition of moduli of complexes of coherent
sheaves on a projective scheme},  J.\ Math.\ Kyoto Univ.\ 42
(2002), 317--329.


\bibitem{Kneser} M.\ Kneser \emph{Erzeugung ganzzahliger
orthogonaler Gruppen durch Spiegelungen}, Math.\ Ann.\ 255 (1981),
453--462.

\bibitem{Laz} R.\ Lazarsfeld \emph{Brill--Noether--Petri without degenerations},
J.\ Diff.\ Geom.\ 23 (1986), 299--307.

\bibitem{Lieb} M.\ Lieblich
\emph{Moduli of complexes on a proper morphism}, J.\ Alg.\ Geom.\
15 (2006), 175--206.


\bibitem{MM} S.\ Mori, S.\ Mukai
\emph{The uniruledness of the moduli space of curves of genus
$11$}, Lecture Notes 1016, (1983), 334--353.

\bibitem{Mor}  D.\ Morrison
\emph{On $K3$ surfaces with large Picard number}, Invent.\ Math.\
75 (1984),  105--121.


\bibitem{Muk}  S.\ Mukai
\emph{On the moduli space of bundles on K3 surfaces, I}, In:
Vector Bundles on Algebraic Varieties,\ Oxford University Press,\
Bombay and London (1987), 341--413.

\bibitem{Mukai2} S.\ Mukai \emph{Finite groups of automorphisms of
K3 surfaces and the Mathieu group}, Invent.\ math.\ 94 (1988),
183--221.

\bibitem{Mum} D.\ Mumford \emph{Rational equivalence of $0$-cycles on surfaces},
 J.\ Math.\ Kyoto Univ. 9 (1968) 195--204.

\bibitem{Nik} V.\ Nikulin \emph{Finite groups of automorphisms of
K\"ahlerian surfaces of type K3}, Moscow Math.\ Soc.\ 38 (1980),
71--137.

\bibitem{PS} I.\ Piatetski-Shapiro, I.\ Shafarevich
\emph{Torelli's theorem for algebraic surfaces of type K3}, Math.\
USSR Izv.\ 5 (1971), 547--587.

\bibitem{Roit} A.\ Roitman \emph{The torsion of the group of
$0$-cycles modulo rational equivalence}, Ann.\ Math.\ 111 (1980),
553--569.


\bibitem{Voisin} C.\ Voisin \emph{Th\'eorie de Hodge et g\'eom\'etrie
alg\'ebrique complexe}, Cours sp\'ecialis\'es 10. SMF (2002).

\bibitem{VoiHK} C.\ Voisin \emph{On the Chow ring of certain algebraic hyper-K\"ahler
manifolds},  Pure Appl.\ Math.\ Q.\ 4,  (2008),

\bibitem{Wall} C.\ T.\ C.\ Wall \emph{On the orthogonal groups of
unimodular quadratic forms II}, J.\ Reine angew.\ Math.\ 213
(1963), 122--136.

\end{thebibliography}
\end{document}